\newtheorem{theorem}{Theorem}[section]
\newtheorem{proposition}[theorem]{Proposition}
\theoremstyle{definition}
\numberwithin{equation}{section}
\renewcommand{\l}{\lambda}
\newcommand{\g}{{\rm g}}
\newcommand{\RR}{\ensuremath{\mathbb{R}}}
\newcommand{\CC}{\ensuremath{\mathbb{C}}}
\newcommand{\sph}{\ensuremath{\mathbb{S}}}
\newcommand{\prtl}{\ensuremath{\partial}}
\newcommand{\hf}{\ensuremath{\frac{1}{2}}}
\newcommand{\supp}{\ensuremath{\text{supp}}}
\newcommand{\veps}{\ensuremath{\varepsilon}}
\title[Refined local smoothing for the Schr\"odinger equation in domains]{On refined local smoothing estimates for the Schr\"odinger equation in exterior domains}
\author{Matthew D. Blair}
\address{Department of Mathematics and Statistics, University of New Mexico,
Albuquerque, NM, 87131} \email{blair@math.unm.edu}
\thanks{The author was supported by the National Science Foundation grants DMS-0801211, DMS-1001529.}
\begin{document}

\begin{abstract}
We consider refinements of the local smoothing estimates for the Schr\"odinger equation in domains which are exterior to a strictly convex obstacle in $\RR^n$.  By restricting the solution to small, frequency dependent collars of the boundary, it is expected that taking its square integral in space-time should exhibit a larger gain in regularity when compared to the usual gain of half a derivative.  By a result of Ivanovici, these refined local smoothing estimates are satisfied by solutions in the exterior of a ball.  We show that when such estimates are valid, they can be combined with wave packet parametrix constructions to yield Strichartz estimates.  This provides an avenue for obtaining these bounds when Neumann boundary conditions are imposed.
\end{abstract}
\maketitle
\section{Introduction}\label{sec:intro}
Let $(\Omega,\g)$ be a Riemannian manifold with boundary of dimension
$n \geq 2$, and let $u(t,x): [-T,T] \times \Omega \to \CC$ be the solution
to the Schr\"{o}dinger equation
\begin{equation}\label{schrodeqn}
(D_t + \Delta_\g)v(t,x) =0\,, \qquad v(0,x)= f(x)\,,
\end{equation}
where $\Delta_\g$ is assumed to be positive and $D_t = -i \prtl_t$.  We assume in addition that $v$ satisfies either Dirichlet or Neumann boundary conditions
\begin{equation}\label{bcs}
v(t,x)\big|_{x \in \prtl \Omega} =0 \qquad \text{ or } \qquad \prtl_\nu
v(t,x)\big|_{x \in \prtl \Omega} =0\,,
\end{equation}
where $\prtl_\nu$ denotes the normal derivative along the boundary.

In recent years there has been a great deal of interest in establishing space-time integrability estimates for solutions to \eqref{schrodeqn}.  One family of particular interest are the Strichartz estimates, which state that for certain triples $(p,q,s)$ with $2< p \leq \infty,$ $2 \leq q < \infty$, and $s \geq 0$
\begin{equation}\label{str}
\|v\|_{L^p((-T,T);L^q(\Omega))} \lesssim \|f\|_{H^s(\Omega)} .
\end{equation}
Here $H^s(\Omega)$ denotes the $L^2$ Sobolev space of order $s$, defined with respect to the spectral resolution of either the Dirichlet or Neumann Laplacian (cf. the concluding remark in \cite[\S1]{bssschrod}).  Strichartz inequalities provide one of the most efficient ways of handling the perturbative theory for many nonlinear Schr\"odinger equations.  The nonlinearity appearing in these equations often involve powers of the solution on the right hand side, and as such these inequalities such as \eqref{str} provide an effective avenue for controlling the strength of the nonlinearity.

Any solution to \eqref{schrodeqn} in Euclidean space ($\Omega = \RR^n$, $\g^{ij} = \delta_{ij}$) can be rescaled to produce a new solution to the same equation.  This gives rise to the admissability condition on the triple $(p,q,s)$
\begin{equation}\label{admiss}
\frac 2p + \frac nq \geq \frac n2 -s.
\end{equation}
Analogous considerations show that this restriction must also hold for any equation posed on a manifold.  When equality holds in \eqref{admiss}, the estimate is said to be \emph{scale invariant}. Otherwise, we say there is a \emph{loss of derivatives} in the estimate as it deviates from the optimal regularity predicted by scaling.

Strichartz estimates are best understood for the equation posed on Euclidean space see \cite{strich77}, \cite{ginvelo85}, \cite{keeltao98} and references therein.  In this case, the scale invariant estimates hold for any triple with $s = 0$ and one can take $T=\infty$.   Sobolev embedding then implies estimates for any $s>0$.  We therefore refer to exponents $p,q$ satisfying $\frac 2p + \frac nq < \frac n2$ as \emph{subcritical} since the proof of scale invariant estimates in this case does not use the full rate of dispersion for solutions to \eqref{schrodeqn}.  Otherwise if $\frac 2p + \frac nq = \frac n2$, the exponents are considered to be \emph{critical}.

The issue is considerably more difficult when one begins to consider boundary value problems in $\RR^n$.  This is due to several reasons, the most notable of which is that boundary conditions begin to affect the flow of energy, which in turn can inhibit dispersion, complicate parametrix constructions, or both.  In spite of this, there has been some partial progress in this area.  To date, the strongest results are for solutions in nontrapping exterior domains $\Omega = \RR^n\setminus\mathcal{K}$, $\g_{ij} = \delta_{ij}$ where $\mathcal{K}$ is taken to be a compact obstacle whose boundary forms a smooth embedded hypersurface in $\RR^n$.  An exterior domain is said to be \emph{nontrapping} if every unit speed broken bicharacteristic escapes a compact set in $\overline{\Omega}$ in finite time.  In this case, one has the following local smoothing estimate of Burq-G\'erard-Tzvetkov \cite{bgtexterior}
\begin{equation}\label{locsmooth}
\|\phi v\|_{L^2((-T,T); H^{s+\hf}(\Omega))} \leq C\|f\|_{H^s(\Omega)}\,, \qquad
\phi \in C_c^\infty (\overline{\Omega}).
\end{equation}

Local smoothing estimates have a long tradition in the analysis of Schr\"odinger equations on $\RR^n$ and originate in the work of Constantin and
Saut~\cite{consaut}, Sj\"olin~\cite{Sjolin}, Vega~\cite{vegasmooth}, and others.
One heuristic argument for the estimate \eqref{locsmooth} follows by wave packet analysis.  A coherent wave packet supported at a large frequency scale $\l$ should propagate at speed $\approx \l$ and hence spend time $\approx 1/\l$ within the support of $\phi$.  Taking the square integral in time should thus yield a gain of one half a derivative.

The connection between local smoothing bounds and Strichartz estimates was observed by Journ\'e-Soffer-Sogge \cite{jss}, who considered Schr\"odinger equations on $\RR^n$ involving a potential term.  They observed that local smoothing bounds control the error which arises by taking the free evolution to be a parametrix for the equation.  A similar approach was used by Staffilani-Tataru \cite{ST} to establish scale invariant Strichartz estimates in certain nontrapping metric perturbations of the Laplacian on $\RR^n$.  Here the idea is that local smoothing bounds control the errors which arise in localizing the problem in space, which can be accomplished by using smooth cutoff functions.  This in turn reduces matters to establishing a parametrix for the equation which may only invert the equation locally, say within the domain of a suitable local diffeomorphism.  This approach was then adapted to exterior domain problems by Burq-G\'erard-Tzvetkov \cite{bgtexterior} and Anton \cite{antonext}.  However in each case, the parametrix construction involved did not yield scale invariant estimates.

Recently, there have been a few results which have improved the losses coming from these parametrix constructions.  The works of Planchon and Vega \cite{planvega} and the author with Smith and Sogge \cite{bssschrod} prove scale invariant estimates for equations in any nontrapping exterior domain, but have restrictions on the admissibility of the Lebesgue exponents $p,q$.  That is, the estimates are only valid for a subset of the exponents $p,q$ satisfying \eqref{admiss}.  However, when the obstacle $\mathcal{K}$ is strictly convex, Ivanovici \cite{ivanomt} showed that the full range of Strichartz estimates hold for the Dirichlet problem.  She showed that the Melrose-Taylor parametrix inverts the equation locally and then used it to prove the desired estimates.

In this work, we consider an alternative approach to Strichartz estimates in domains exterior to a strictly convex obstacle.  It uses refinements of the local smoothing estimates to control the error terms which arise in the wave packet parametrix construction of \cite{bssschrod}.  Such an approach has already been considered in two contexts.  One is the work of Tataru \cite{tataruschrod}, who once again considered variable coefficient Schr\"odinger equations, but treated a more general family of asymptotically flat metrics.  Here a wave packet parametrix is used, but in order to control the error terms, a local smoothing estimate is needed on frequency dependent scales.  An improved estimate is obtained by restricting the solution to annuli whose size may depend on the frequency of the solution.  A related approach was considered in other works of Ivanovici \cite{ivanoballs}, \cite{ivanorevise}, who proved frequency dependent estimates for domains which are exterior to the unit ball in $\RR^n$ (cf. Theorem \ref{thm:nrgdk} below).  Here the solution is restricted to a frequency dependent collars about the boundary.  It was then shown that when such an estimate is combined with Sobolev embedding, Strichartz estimates with a loss of derivatives follow as a result.

In order to describe the refined local smoothing estimates considered here, we let $\beta$ be a smooth bump function compactly supported in the interval $(0,4)$.  Given a solution $v$ to \eqref{schrodeqn}, \eqref{bcs} with $f \in L^2(\Omega)$ and a frequency scale $\l \gg 1$, we may define $\beta(-\l^{-2}D_t)v$ as the tempered distribution $\mathscr{F}^{-1}\{\beta(-\l^{-2}\cdot)\mathscr{F}v \}$, where $\mathscr{F}$ denotes the partial Fourier transform in time.  We now let $d(x,\prtl \Omega)$ denote the distance from $x$ to the boundary of $\Omega$ and let $\chi_j$ be a bump function satisfying
\begin{equation}\label{locsmoothchi}
\supp(\chi_j) \subset \{ x: 0 \leq d(x,\prtl \Omega)  \leq 2^{-j+1}\}, \qquad \chi_j \big|_{\{x:d(x,\prtl \Omega) \in [0,2^{-j}]\}} \equiv 1.
\end{equation}
The local smoothing estimates we examine state that if $\l \gg 1$ and $1 \geq 2^{-j}\geq \l^{-\frac 23}$,
\begin{equation}\label{homognrgdk}
\|\beta(-\l^{-2}D_t)\chi_j v \|_{L^2(\RR \times \Omega)} \leq C \l^{-\frac 12} 2^{-\frac j4} \|f\|_{L^2(\Omega)},
\end{equation}
with $C$ independent of $\l$, $j$ and $v$.  Our main theorem states that whenever these estimates are valid, they imply scale invariant Strichartz estimates for subcritical $(p,q)$.

\begin{theorem}\label{thm:stz}
Suppose $\mathcal{K} \subset\RR^n$ is any smooth, compact, strictly convex obstacle and $\Omega = \RR^n\setminus\mathcal{K}$.  If the estimates \eqref{homognrgdk} are satisfied for solutions $v$ to \eqref{schrodeqn}, \eqref{bcs} then the scale invariant Strichartz estimates \eqref{str} are valid provided $(p,q)$ are subcritical, that is
$
\frac 2p + \frac nq < \frac n2.
$
Moreover, if $\frac 2p + \frac nq = \frac n2$, then the estimates \eqref{str} are valid for any $s>0$ (i.e. they hold with an arbitrarily small loss of derivatives).  
\end{theorem}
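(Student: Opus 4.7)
The plan is to plug the refined local smoothing hypothesis \eqref{homognrgdk} into the wave packet parametrix construction of \cite{bssschrod} in place of the cruder estimate \eqref{locsmooth}, which should permit one to close the argument at the full subcritical range rather than only the subset of exponents treated there.

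A Littlewood--Paley reduction and the temporal frequency localization built into \eqref{homognrgdk} reduce matters to proving, for each dyadic $\lambda \gg 1$, the frequency-localized bound
\[
\|\beta(-\l^{-2}D_t)v\|_{L^p([-T,T];L^q(\Omega))} \lesssim \lambda^{\frac n2-\frac nq-\frac 2p} \|f\|_{L^2(\Omega)}.
\]
I would then split $v$ spatially using a cutoff $\chi_{\rm bulk}$ supported in $\{d(x,\prtl\Omega)\ge c\lambda^{-2/3}\}$. On the bulk region the wave packet parametrix of \cite{bssschrod} is valid, delivering the scale invariant Strichartz bound modulo a Duhamel remainder driven by the commutator $[\Delta_\g,\chi_{\rm bulk}]v$. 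This commutator is supported in a collar of width $\sim\lambda^{-2/3}$, and the hypothesis \eqref{homognrgdk} applied at the smallest admissible scale $2^{-j}\sim\lambda^{-2/3}$ supplies the $L^2_{t,x}$ gain needed to absorb the two derivatives landing on $\chi_{\rm bulk}$ and close the bulk estimate.

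The boundary piece $(1-\chi_{\rm bulk})v$ is further decomposed into dyadic shells $\chi_j v$ of width $2^{-j}$ with $\lambda^{-2/3}\le 2^{-j}\le 1$. A Bernstein-type estimate exploiting the spatial frequency localization at scale $\lambda$ and the narrow support $\sim 2^{-j}$ in the normal direction upgrades each $\chi_j v$ from $L^2_x$ to $L^q_x$ with a loss quantified by $\lambda$ and $2^{-j}$; interpolating with the energy identity $\|v\|_{L^\infty_tL^2_x}\lesssim \|f\|_{L^2}$ to introduce the $L^p_t$ norm and invoking \eqref{homognrgdk} for the residual $L^2_{t,x}$ factor produces a bound of the form $\lambda^{\frac n2-\frac nq-\frac 2p} 2^{-\delta j}\|f\|_{L^2}$, with $\delta>0$ whenever $(p,q)$ is strictly subcritical. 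Summing over $j$ closes the subcritical case; the loss on the critical line arises from the logarithmic divergence of the sum at $\delta=0$ and is absorbed into an arbitrarily small $\lambda^{\varepsilon}$ loss.

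The main obstacle is the reconciliation of the parametrix of \cite{bssschrod} with the $\lambda^{-2/3}$ collar: in its original form, the cutoff separating the parametrix region from the error region is kept at a fixed, $\lambda$-independent distance from $\prtl\Omega$, and pushing it down to the Airy scale $\lambda^{-2/3}$ requires a careful reexamination of the semiclassical expansions of the wave packet kernels near grazing directions. Related to this, especially in the Neumann case, one must verify that the commutator remainders can actually be represented in the form $\beta(-\lambda^{-2}D_t)\chi_j \tilde v$ for an auxiliary solution $\tilde v$ with $L^2$ data controlled by $\|f\|_{L^2}$, so that \eqref{homognrgdk} applies as stated.
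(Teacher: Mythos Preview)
Your proposal has a genuine gap in the treatment of the boundary piece.  Upgrading $\chi_j v$ from $L^2_x$ to $L^q_x$ by a Bernstein-type inequality and then interpolating with energy conservation is essentially the argument of Ivanovici \cite{ivanoballs}, \cite{ivanorevise}, and as the introduction notes, that approach only yields Strichartz estimates \emph{with a loss of derivatives}.  The problem is that Bernstein discards all of the dispersion in the tangential directions: you replace a factor that should behave like $|t|^{-(n-1)/2}$ by the static bound $\lambda^{(n-1)(\frac12-\frac1q)}$, and the $2^{-j/4}$ gain from \eqref{homognrgdk} is not enough to compensate.  A back-of-the-envelope check at the worst scale $2^{-j}\sim\lambda^{-2/3}$ shows the resulting power of $\lambda$ exceeds the scale-invariant exponent $\frac n2-\frac nq-\frac 2p$ unless $(p,q)$ lies in a strict subrange.

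The paper's argument is structurally different.  It does not separate a ``bulk'' region where the parametrix applies from a ``boundary'' region handled by soft estimates.  Instead, it reflects the solution evenly across $\prtl\Omega$ in boundary normal coordinates (producing a problem on $\RR^n$ with Lipschitz coefficients), and then performs a joint spatial--angular decomposition: pieces $v_j$ localized to $\{|x_n|\approx 2^{-j},\ |\xi_n|\lesssim\lambda 2^{-j/2}\}$ and pieces $w_j$ localized to $\{|x_n|\lesssim 2^{-j},\ |\xi_n|\approx\lambda 2^{-j/2}\}$, with $2^{-j}$ ranging down to $\lambda^{-\alpha}$ for some $\alpha<\frac23$.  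The wave packet parametrix of \cite{bssschrod} is applied to \emph{every} such piece, including those closest to the boundary; the angular localization $|\xi_n|\lesssim\lambda\theta$ (with $\theta=2^{-j/2}$) yields an improved dispersive bound \eqref{Wdispersive} and hence a Strichartz estimate with an extra gain $\theta^{\sigma(p,q)}$ where $\sigma(p,q)>0$ exactly in the subcritical range.  This gain, together with the $2^{-j/4}$ from \eqref{homognrgdk} (rephrased as Proposition~\ref{thm:nrgcorollary}), makes the sum over $j$ converge.  Your ``main obstacle'' is therefore a misreading: the parametrix of \cite{bssschrod} is not kept at fixed distance from $\prtl\Omega$ but is built after reflection and covers the entire collar; what was missing in \cite{bssschrod} was not geometric reach but the sharper local smoothing input \eqref{homognrgdk} needed to absorb the $2^{j/4}$ loss in \eqref{ntangstz}--\eqref{strslab0}.
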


It is expected that the local smoothing bounds \eqref{homognrgdk} should hold for any domain which is exterior to a strictly convex obstacle.  Indeed, any wave packet at frequency $\l$ concentrated along a glancing ray should spend a time comparable to $\l^{-1}2^{-\frac j2}$ in the support of $\chi_j$.  Taking the square integral in time should thus yield a gain of $\l^{-\frac 12}2^{-\frac j4}$.  It appears to be difficult to prove these estimates in general.  However, as alluded to above, they are valid in the exterior of a ball.
\begin{theorem}[O. Ivanovici]\label{thm:nrgdk}
Suppose $\mathcal{K}$ is the unit ball in $\RR^n$ and $\Omega = \RR^n\setminus\mathcal{K}$.  Let $\Delta$ be the Dirichlet or Neumann Laplacian.  Then given any solution $v$ to \eqref{schrodeqn}, \eqref{bcs} with $f \in L^2(\Omega)$ satisfies the estimates \eqref{homognrgdk}.  Moreover, if $F(s,x) \in L^2(\RR\times \Omega)$ and $u(t,\cdot) = \int_{-\infty}^t e^{-i(t-s)\Delta}\left(\beta(-\l^{-2}D_s) \chi_j F(s,\cdot)\right)\,ds$, we have
\begin{equation}\label{inhomognrgdk}
\|\beta(-\l^{-2}D_t) \chi_j u \|_{L^2(\RR \times \Omega)} \leq C\l^{-1} 2^{-\frac j2} \|\chi_j F\|_{L^2(\RR\times\Omega)}.
\end{equation}
\end{theorem}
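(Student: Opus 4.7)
The strategy is to exploit the rotational symmetry of $\Omega = \RR^n \setminus \mathcal{K}$: separate variables in spherical harmonics, reducing the problem to a family of one-dimensional Schr\"odinger equations on the half-line $(1,\infty)$ indexed by the angular mode $k$, and then analyze these uniformly in $k$ via Bessel/Airy asymptotics for the generalized eigenfunctions. Expand $v(t,r\theta) = \sum_{k \geq 0} v_k(t,r)Y_k(\theta)$ in an $L^2(\sph^{n-1})$-orthonormal basis of spherical harmonics of degree $k$, and set $w_k = r^{(n-1)/2}v_k$. The radial equation then becomes $i\prtl_t w_k = P_k w_k$ on $L^2((1,\infty),dr)$ with
$$P_k = -\prtl_r^2 + \frac{\mu_k^2 - \frac{1}{4}}{r^2}, \qquad \mu_k = k + \frac{n-2}{2},$$
and a Dirichlet (resp.\ Robin-type) boundary condition at $r=1$. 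By orthogonality on $\sph^{n-1}$, it suffices to prove the analogue of \eqref{homognrgdk} for each $P_k$ with constant uniform in $k$.

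Each $P_k$ has purely absolutely continuous spectrum $[0,\infty)$, and its spectral transform is implemented by generalized eigenfunctions $e_k(r,\sqrt\tau)$ which in the Dirichlet case are explicitly
$$e_k(r,\sqrt\tau) \;=\; c_k(\tau)\,\sqrt{r}\,\bigl[Y_{\mu_k}(\sqrt\tau)\,J_{\mu_k}(\sqrt\tau\,r) - J_{\mu_k}(\sqrt\tau)\,Y_{\mu_k}(\sqrt\tau\,r)\bigr],$$
with the normalization $c_k(\tau)$ chosen so that the transform is unitary on $L^2((1,\infty))$. Applying Plancherel in $t$ to $w_k(t) = e^{-itP_k}w_k(0)$ and using this spectral representation, one obtains
$$\|\beta(-\l^{-2}D_t)\chi_j w_k\|_{L^2(\RR\times(1,\infty))}^2 \;=\; 2\pi \int_0^\infty |\beta(\l^{-2}\tau)|^2\,|\widetilde{w_k(0)}(\tau)|^2\,\|\chi_j\,e_k(\cdot,\sqrt\tau)\|_{L^2}^2\,d\tau,$$
where $\widetilde{w_k(0)}$ denotes the spectral transform of the initial data. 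Plancherel for this transform reduces the homogeneous estimate to the uniform pointwise bound
$$\sup_{k \geq 0,\; \tau \in [\l^2,\,4\l^2]}\ \|\chi_j\,e_k(\cdot,\sqrt\tau)\|_{L^2}^2 \;\lesssim\; \l^{-1}\,2^{-j/2}.$$

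Split into regimes according to the classical turning point $r_* = \mu_k/\sqrt\tau$. In the hyperbolic case $r_* \le 1-c$ (small $\mu_k$), $J_{\mu_k}$ and $Y_{\mu_k}$ are oscillatory throughout $\supp\chi_j$ and standard Debye bounds yield a bound strictly stronger than the target. In the elliptic case $r_* \ge 1+c$ (large $\mu_k$), $J_{\mu_k}(\sqrt\tau r)$ decays exponentially toward $r=1$, and the contribution on $\supp\chi_j$ is negligible in the range $2^{-j}\ge \l^{-2/3}$. The decisive case is the glancing regime $r_* \approx 1$, where the turning point lies close to or within the window. Here Olver's uniform asymptotic expansions express $e_k(r,\sqrt\tau)$ near $r_*$ as a combination of $\Ai$ and $\Bi$ evaluated at a rescaled coordinate of Airy scale $\l^{-2/3}$ (the natural scale at a linear turning point of slope $\approx \l^2$). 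Tracking the spectral normalization $c_k(\tau)$ — which in the glancing regime gives an Airy amplitude $\sim \l^{-1/3}$ — and combining with the bound $\int_0^N |\Ai(-y)|^2\,dy \lesssim N^{1/2}$ on a window of length $N \sim 2^{-j}\l^{2/3}$ produces $\|\chi_j e_k\|_{L^2}^2 \lesssim \l^{-1}\,2^{-j/2}$; note that the hypothesis $2^{-j} \ge \l^{-2/3}$ is precisely the requirement $N \gtrsim 1$, i.e.\ that the window contain at least one Airy wavelength. This matches the wave-packet heuristic that a ray of angular momentum $\mu_k \approx \l$ spends time $\sim \l^{-1}2^{-j/2}$ in the $2^{-j}$-collar.

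The inhomogeneous estimate \eqref{inhomognrgdk} follows by a $TT^*$-type argument: taking $Tf = \beta(-\l^{-2}D_t)\chi_j e^{-it\Delta}f$, the squared homogeneous bound gives $\|TT^*\|_{L^2 \to L^2} \lesssim \l^{-1}2^{-j/2}$, and the kernel of $TT^*$ is exactly that appearing on the right of \eqref{inhomognrgdk} except for the retarded truncation $\int_{-\infty}^t$; the truncation is handled by decomposing into symmetric and antisymmetric parts in $t-s$, the antisymmetric part absorbed using the smoothness of the frequency cutoff $\beta$ (equivalently, a Christ--Kiselev-type argument, which is harmless since the $L^2_t$-norm is not at a critical endpoint for the compositional structure here). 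The main obstacle is the glancing analysis of the previous paragraph: while Olver's expansions are classical, their error terms and the normalization $c_k(\tau)$ must be controlled sharply enough near $\mu_k = \sqrt\tau$ to recover the full amplitude gain $\l^{-1/2}2^{-j/4}$ rather than a weaker exponent; for the Neumann case one additionally checks that the boundary term entering through the Robin-type condition at $r=1$ does not disturb the transition-region asymptotics.
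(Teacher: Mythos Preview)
Your separation-of-variables reduction and the Olver/Airy analysis in the glancing regime are exactly the analytic core of the paper's argument, and your identification of the Neumann subtlety (the boundary coefficient must cancel the exponential growth of $Y_{\mu_k}$ in the elliptic zone) is also what the paper singles out. The organizational difference is that the paper takes the Fourier transform in $t$ and reduces \emph{both} estimates to a resolvent bound: for $\tau\approx\l^2$ one shows $\|\chi_j(\Delta-\tau-i0)^{-1}\chi_j\|_{L^2\to L^2}\lesssim \l^{-1}2^{-j/2}$ by writing down the outgoing Green's kernel $G_{\nu,\l}(r,s)$ for each mode explicitly in terms of $J_\nu,\,H_\nu$ and bounding $\|G_{\nu,\l}\|_{L^2(A_j\times A_j)}$ pointwise via the same Bessel asymptotics you invoke. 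The homogeneous estimate then follows from the inhomogeneous one by duality, not the other way around.

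The reason this ordering matters is that your final step has a real gap. The Christ--Kiselev lemma requires $L^{p}\to L^{q}$ with $p<q$; the $L^2_t\to L^2_t$ endpoint is precisely where it fails, and the vague appeal to ``smoothness of $\beta$'' does not repair this. In time-Fourier, the retarded Duhamel operator in \eqref{inhomognrgdk} is multiplication by $\beta(\l^{-2}\tau)\chi_j(\Delta-\tau-i0)^{-1}\chi_j\beta(\l^{-2}\tau)$, so the inhomogeneous bound is equivalent to a \emph{full} limiting-absorption resolvent estimate at energies $\tau\approx\l^2$. Your eigenfunction bound $\|\chi_j e_k(\cdot,\sqrt\tau)\|_{L^2}^2\lesssim\l^{-1}2^{-j/2}$ controls only the spectral-measure density (equivalently, the imaginary part of the resolvent, which is exactly the $TT^*$ content of the homogeneous estimate); it says nothing about the principal-value part. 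In Kato's language, you have established $\Delta$-smoothness of $\chi_j$ on the relevant spectral window, but the retarded estimate requires the strictly stronger supersmoothness $\sup_{\tau}\|\chi_j R(\tau+i0)\chi_j\|<\infty$.

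The fix is minor in terms of analysis but essential in logic: instead of bounding only the normalized eigenfunction $e_k=c_k(\tau)\phi_-$, bound the full Green's kernel $G_{\nu_k,\l}(r,s)=W^{-1}\phi_-(r_<)\phi_+(r_>)$, where $\phi_+(r)=r^{(2-n)/2}H_{\mu_k}(\sqrt\tau\,r)$ is the outgoing solution. The same Olver asymptotics you already use for $\phi_-$ apply to $\phi_+$ (indeed $|H_\nu|$ is simpler since it has no zeros), and the Wronskian is explicit. This yields the pointwise bound on $G$ and hence the resolvent estimate directly, from which both \eqref{inhomognrgdk} and \eqref{homognrgdk} follow.
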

This theorem is essentially due to Ivanovici \cite{ivanoballs}, \cite{ivanorevise}.  Since \eqref{homognrgdk}, \eqref{inhomognrgdk}, involve  a slight restatement of the estimates in her work, we make some remarks on the proof of Theorem \ref{thm:nrgdk} in the appendix.

Any Strichartz estimate that results from Theorem \ref{thm:stz} will not be new for Dirichlet boundary conditions.  As mentioned above, they follow from a different result of Ivanovici \cite{ivanomt}, which shows that the full range of estimates are valid.  However, when Neumann boundary conditions are imposed in the exterior of a ball, they expand the range of exponents $(p,q)$ for which the scale invariant estimates are valid when compared to \cite{bssschrod}.  The approach in \cite{ivanomt} uses the Melrose-Taylor parametrix, which yields Strichartz estimates in the Dirichlet case.  However, at the time of this writing, it is unclear that this approach can be effective for Neumann boundary conditions. In the present work, we instead use the parametrix construction in \cite{bssschrod}, which is based on one used for the wave equation by Smith and Sogge in \cite{smithsogge06}.  One of the main steps here is to localize the solution to coordinate charts which flatten the boundary, giving rise to a variable coefficient problem.  The solution and coefficients are then reflected in the boundary, which creates a problem with rough coefficients.  Wave packets can then be used to construct a parametrix for the equation.  In previous works, the virtue of this approach is that it is effective in handling points of convexity and inflection in the boundary of $\Omega$.  This even resulted in sharp $L^p$ estimates on spectral clusters defined on compact domains (see \cite{smithsogge06}).  In the present work, the idea is that since the construction deals with the boundary conditions in a very direct fashion, it can be effective in treating both Dirichlet and Neumann conditions.

\subsection*{Notation} The expression $A \lesssim B$ means that $A \leq C B$ for some implicit constant $C$ depending only on the domain $\Omega$ under consideration and possibly the triple $(p,q,s)$ or indices involved in the inequality.  By the same token, $A \approx B$ means that both $A \lesssim B$ and $B \lesssim A$.  Also, given a Banach space $X$, we will often abbreviate the vector valued $L^p$ space $L^p((-T,T);X)$ by $L^p_T X$.
 
\subsection*{Acknowledgements}
The present work stems from the author's collaborations with Hart Smith and Christopher Sogge.  It is a pleasure to thank them for their insight on boundary value problems and wave packet methods.  The author is also grateful for helpful comments from the anonymous referee.

\section{Strichartz estimates}\label{sec:stz}
In this section, we prove Theorem \ref{thm:stz}.  We focus mainly on the case of Neumann conditions as the adjustments needed for the Dirichlet condition are minor.

\subsection{Preliminary reductions}\label{sec:stzprelim}
Here we reduce the Strichartz estimates of Theorem \ref{thm:stz} to proving inequalities for solutions to a variable coefficient Schr\"odinger equation on $\RR^n$.  The approach here draws from the arguments in \cite[\S2]{smithsogge06}.  In future sections, we will see how wave packets can be used to prove the desired estimates.

It suffices to prove Theorem \ref{thm:stz} under the assumption that $0 < s < \frac 12$.  This is clear when $p,q$ are critical and when $p,q$ are subcritical the full range of desired estimates follows from combining these cases with Sobolev embedding.

Let $\{\phi_j\}_{j=0}^k$ be a smooth partition of unity on $\overline{\Omega}$ such that $d(\supp(\phi_0), \prtl\Omega) >0$ and identically one on a large ball containing $\mathcal{K}$.  Thus when $j\geq 1$, we assume $\phi_j$ is supported in a suitable coordinate chart near the boundary. Since $\phi_0$ vanishes in a neighborhood of $\prtl \Omega$, $\phi_0 v$ solves the following inhomogeneous initial value problem on all of $\RR^n$
$$
(D_t + \Delta)(\phi_0v) =[\Delta,\phi_0]v, \qquad \phi_0 v|_{t=0} = \phi_0 f.
$$
Throughout this work, we will interpret operations such as $[\Delta,\phi_0]$ as the commutator of $\Delta$ with the multiplication operator $v \mapsto \phi_0 v$.  By \cite[Proposition 2.10]{bgtexterior}, Strichartz estimates on $\phi_0u$ follow from the inequalities on $\RR^n$ and the local smoothing estimates on the unit scale~\eqref{locsmooth}.

Fix any $j\geq 1$ and let $\phi=\phi_j$, suppressing $j$ in the notation below.  We may assume that $\phi$ is supported in a neighborhood of $\prtl \Omega$ inside the domain of a boundary normal coordinate chart.  Hence we suppose that $x=(x',x_n)$ forms a coordinate system over $\supp(\phi)$ with $x_n=0$, $x_n >0$ defining the boundary and interior respectively.  In these coordinates, we let $\g_{ij}$ denote the coefficient of the metric tensor formed by pulling back the flat metric.  The boundary normal structure means that $\g_{in} = \delta_{in}$ and hence $x_n = d(x,\prtl \Omega)$.  We denote the Laplace operator acting on a function $h(x)$ in these coordinates as $\Delta_\g$. Using the summation convention and setting $\varrho(x) = \sqrt{\det{\g_{lk}(x)}}$, $D_i = -i\prtl_i$, it takes the form
\begin{equation}\label{coordlap}
\Delta_\g h = \varrho^{-1}(x) D_i \left(\g^{ij}(x)\varrho(x) D_j h \right).
\end{equation}

Taking a sufficiently fine partition of unity above and applying linear transformations if necessary, we may also assume that $\phi$ is supported in $\{x\in \RR^n_+:|x|<1 \}$ and that the domain of the local diffeomorphism defining the coordinates contains $\{x\in \RR^n_+:|x|<3 \}$.   However, we want the $\g^{ij}$ to be defined on all of $\RR^n_+$. To this end, we may assume that $\g^{ij}$ remains unchanged in the set $\{x\in \RR^n_+:|x|\leq 2\}$ but that $\g^{ij}(x)=\delta_{ij}$ for $|x| \geq 3$, as this does not alter the equation for $\phi v$.  Furthermore, we may assume that for some $N$ large and $c_0$ sufficiently small
\begin{equation}\label{c0}
\|\g^{ij}-\delta_{ij}\|_{C^N(\RR^n_+)}
\leq c_0, \qquad  \|\varrho -1\|_{C^N(\RR^n_+)} \leq c_0 .
\end{equation}

Since we may assume that $\phi$ is independent of $x_n$ near the boundary, the function $\phi v(t,\cdot)$ satisfies the Neumann boundary condition $\prtl_n (\phi v)(t,x',0)=0$.  The structure of the boundary normal coordinates allows us to extend $\phi v(t,x)$, $\Delta_\g (\phi v )(t,x)$ and the coefficients $\g^{ij}(x)$, $\varrho(x)$ to all of $\RR^n$ in an even fashion with respect to the boundary hypersurface $x_n=0$ (take an odd extension of $\phi v$ and $\Delta_\g (\phi v)$ for Dirichlet conditions).  Given the boundary condition, the extension defines $\phi v$ as a $C^{1,1}$ function on $\RR^n$ and $\g^{ij}(x)$, $\varrho(x)$ as Lipschitz functions on $\RR^n$.

We next claim that it suffices to show that
\begin{equation}\label{phiv}
\|\phi v\|_{L^p_TL^q(\RR^n)} \lesssim \|\phi v \|_{L^2_T H^{s+\frac 12}(\RR^n)} + \|(D_t + \Delta_\g) (\phi v) \|_{L^2_T H^{s-\frac 12}(\RR^n)},
\end{equation}
where the Sobolev spaces in $\RR^n$ on the right are the usual ones defined using the Fourier transform.  Given \eqref{locsmooth}, this follows by showing that
\begin{equation}\label{omegadom}
\|\phi v \|_{L^2_T H^{s+\frac 12}(\RR^n)} + \|(D_t + \Delta_\g) (\phi v) \|_{L^2_T H^{s-\frac 12}(\RR^n)}
\lesssim \sum_{|\gamma|\leq 2} \|(\prtl^\gamma\phi) v \|_{L^2_T H^{s+\frac 12}(\Omega)}
\end{equation}
which is well defined since we assume that $\phi$ is independent of $x_n$ near the boundary and hence $(\prtl^\gamma \phi) v$ satisfies the boundary condition.

Observe that for any $h$ supported in $\{x: |x|\leq 2\}$ such that $\prtl_n h|_{x_n=0}=0$, $h|_{\RR^n_+}\in C^\infty$, and $h(x',x_n)=h(x',-x_n)$,  we have
$$
\|h\|_{H^2(\RR^n)} \lesssim \|h\|_{L^2(\RR^n)} + \|\Delta_\g h\|_{L^2(\RR^n)} \lesssim \|h\|_{H^2(\RR^n)} .
$$
Indeed, the second inequality here is evident and the first inequality follows from elliptic regularity for operators with Lipschitz coefficients (see e.g. \cite[Theorem 9.11]{GT}).  The middle term is $\approx \|(1+\Delta_\g) h\|_{L^2(\RR^n_+)}$ which gives  $\|h\|_{H^2(\RR^n)} \approx \|h\|_{H^2(\Omega)} $.
We now observe that this implies
$$
\|h\|_{H^r(\RR^n)} \approx \|h\|_{H^r(\Omega)} \qquad \text{for } 0 \leq r \leq 2.
$$
Indeed, interpolating the $H^2$ estimates with the trivial $L^2$ bounds shows this for any $0 \leq r \leq 2$. Already this is enough to bound the first term on the left in \eqref{omegadom}.  Moreover, it is now sufficient to see that
\begin{equation}\label{nablacontrol}
\|(D_t + \Delta_\g) (\phi v) \|_{L^2_T H^{s-\frac 12}(\RR^n)} \lesssim \sum_{|\gamma|\leq 2} \|(\prtl^\gamma\phi) v \|_{L^2_T H^{s+\frac 12}(\RR^n)}
\end{equation}
However, $(D_t + \Delta_\g) (\phi v)=[\Delta_\g,\phi] v$, which can be written as
$$
\varrho^{-1}D_i(\g^{ij}\varrho(D_j\phi)v) + \g^{ij}D_j((D_i\phi) v)-\g^{ij}(D_{ij}\phi)v,
$$
which yields \eqref{nablacontrol} since multiplication by a Lipschitz function preserves $H^{s\pm\frac 12}(\RR^n)$ (given our assumption that $0<s<\frac 12$).

To show \eqref{phiv}, we start with a careful Littlewood-Paley decomposition. Let $\{\beta_l(\zeta)\}_{l=0}^\infty$ be a sequence of smooth functions $\beta_l:[0,\infty) \to [0,1]$ such that
\begin{equation}\label{littpaleyseq}
\sum_{l=0}^\infty \beta_l(\zeta) =1 \text{ for } \zeta \geq 0, \qquad \beta_l(\zeta) = \beta_1(2^{-l+1}\zeta) \text{ for } l\geq 1,
\end{equation}
with $\supp(\beta_0) \subset [0,2)$ and $\supp(\beta_1) \subset (2^{-\frac 12},2^{\frac 32})$.   For $k\geq 0$ define
\begin{align*}
v_{k} &:= \sum_{m =0}^{k+1} \beta_k(|D'|)\beta_m(|D_n|)(\phi v),\\
v_{k,l} &:= \beta_k(|D'|)\beta_l(|D_n|)(\phi v),  \qquad \text{for }0\leq k +2 \leq l < \infty,
\end{align*}
where $\beta_k(|D'|)$, $\beta_l(|D_n|)$ are Fourier multipliers with symbols $\beta_k(|\xi'|)$ and $\beta_l(|\xi_n|)$ respectively. Applying the Littlewood-Paley square function estimate first in $\xi'$, then in $\xi_n$ we have that
\begin{equation}\label{littpaleyest}
\|\phi v\|_{L^p_T L^q}^2 \lesssim \sum_{k=0}^\infty \|v_{k}\|_{L^p_T L^q}^2 +\sum_{k=0}^\infty \sum_{ l=k+2}^\infty \|v_{k,l}\|_{L^p_T L^q}^2
\end{equation}
and since we are reduced to \eqref{phiv} which involves $L^p$ and Sobolev spaces over $\RR^n$, we suppress that dependence here and in what follows. We also pause to observe that since the symbol corresponding to $\beta_l(|D_n|)$ is even and that $\phi v$ is even with respect to $x_n$, we have that
\begin{equation}\label{neumanncoord}
\prtl_n v_k \big|_{x_n =0}=0, \qquad \text{and $v_k$ is even across $x_n=0$}.
\end{equation}
Furthermore, given the compact support of $\phi$ in $\{ |x| \leq 1\}$, we can conclude that
\begin{equation}\label{balldecay}
\left|v_k(t,x)\right| \lesssim
2^{-kN}|x|^{-N}\|(\phi v)(t,\cdot)\|_{L^2}, \qquad |x| \geq 3/2.
\end{equation}

The remainder of this subsection will show that Theorem \ref{thm:stz} is a consequence of a family of estimates on $\|v_{k}\|_{L^p_T L^q}$, $\|v_{k,l}\|_{L^p_T L^q}$.  To state this, let $\g^{ij}_l$, $\varrho_l$ denote the regularized coefficients formed by truncating the $\g^{ij}$ to frequencies less than $c2^l$ for some small $c$. We have the crude estimates (which use that $\g^{ij}$ is Lipschitz)
\begin{equation}\label{firsttruncest}
|\g^{ij}(x)-\g^{ij}_l(x) | \lesssim 2^{-l}, \qquad |\prtl_x^\beta
\g^{ij}_l (x)| \lesssim 2^{l\max(0,|\beta|-1)},
\end{equation}
and similarly for $\varrho_l$.  Also, let $\Delta_{\g_l}$ denote the differential operator formed by replacing the coefficients in \eqref{coordlap} with their regularized counterparts.

\begin{theorem}
Let $p,q,s$ be as in Theorem \ref{thm:stz}.  Then $v_{k}$, $v_{k,l}$ satisfy
\begin{multline}\label{ukredux}
\|v_{k}\|_{L^p_{2^{-k}} L^q} \lesssim \\
2^{ks}\Big(2^{\frac k2}\|v_{k}\|_{L^2_{2^{-k+1}} L^2} + 2^{-\frac k2}\|(D_t + \Delta_{\g_k})v_{k}\|_{L^2_{2^{-k+1}} L^2}+ 2^{-k}\|\phi v\|_{L^2_{2^{-k+1}} L^2}  \Big),
\end{multline}
\begin{equation}\label{uklredux}
\|v_{k,l}\|_{L^p_{2^{-l}} L^q} \lesssim 2^{ls}\left(2^{\frac l2}\|v_{k,l}\|_{L^2_{2^{-l+1}} L^2} + 2^{-\frac l2}\|(D_t + \Delta_{\g_l})v_{k,l}\|_{L^2_{2^{-l+1}} L^2} \right).
\end{equation}
\end{theorem}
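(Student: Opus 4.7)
My approach to both \eqref{ukredux} and \eqref{uklredux} would exploit that $v_k$ (resp.\ $v_{k,l}$) is frequency localized at scale $\sim 2^k$ (resp.\ $\sim 2^l$) and that the associated Schr\"odinger time scale is $2^{-k}$ (resp.\ $2^{-l}$). On such a short interval I would apply Duhamel's formula with respect to the \emph{regularized} operator $D_t+\Delta_{\g_k}$,
\[
v_k(t) = e^{-i(t-t_0)\Delta_{\g_k}}v_k(t_0) + i\int_{t_0}^{t} e^{-i(t-s)\Delta_{\g_k}}(D_s+\Delta_{\g_k})v_k(s)\,ds,
\]
picking $t_0$ in the doubled interval of length $2^{-k+1}$ by a pigeonhole argument so that $\|v_k(t_0)\|_{L^2}\lesssim 2^{k/2}\|v_k\|_{L^2_{2^{-k+1}}L^2}$.

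The key analytic input would be a frequency-localized Strichartz bound
\[
\|e^{-it\Delta_{\g_k}}w\|_{L^p_I L^q(\RR^n)} \lesssim 2^{ks}\|w\|_{L^2},
\]
valid for $w$ spatially concentrated at frequency $\sim 2^k$ and for $|I|\leq 2^{-k}$. Granting this, the homogeneous term yields the $2^{k(s+\frac12)}\|v_k\|_{L^2_{2^{-k+1}}L^2}$ piece of \eqref{ukredux}. For the integral term I would upgrade this to an $L^1_I L^2 \to L^p_I L^q$ bound via Minkowski (or the Christ-Kiselev lemma), then pay a factor $2^{-k/2}$ by Cauchy-Schwarz over $I$, producing the second term. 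The same template with $k\mapsto l$ would give \eqref{uklredux}, and since $v_{k,l}$ is a single Littlewood-Paley piece no lower-order remainder is needed there.

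The third term $2^{ks-k}\|\phi v\|_{L^2_{2^{-k+1}}L^2}$ in \eqref{ukredux} would collect lower-order errors. Writing $v_k = P_k(\phi v)$ with $P_k = \beta_k(|D'|)\sum_{m=0}^{k+1}\beta_m(|D_n|)$, the forcing $(D_t+\Delta_{\g_k})v_k$ decomposes as $P_k[\Delta_\g,\phi]v$ (the principal piece, handled by \eqref{nablacontrol}-style bounds) plus the commutator $[\Delta_{\g_k},P_k](\phi v)$ plus the regularization error $P_k(\Delta_{\g_k}-\Delta_\g)(\phi v)$. Using \eqref{firsttruncest} together with Coifman--Meyer-type bounds on commutators of Fourier multipliers with Lipschitz-coefficient operators, these last two pieces should contribute remainders that, after passing through the Strichartz estimate, account for the advertised $\|\phi v\|$ term.

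The principal obstacle, which I expect to dominate the technical effort, is the frequency-localized Strichartz bound for $e^{-it\Delta_{\g_k}}$ itself. Standard Strichartz results do not apply: although $\g_k^{ij}$ is smooth, \eqref{firsttruncest} only yields $|\prtl^\beta\g_k^{ij}|\lesssim 2^{k(|\beta|-1)}$, so the coefficients are not uniformly controlled at the relevant scales. However, on time $2^{-k}$ a wave packet at frequency $2^k$ propagates spatial distance $O(1)$ with transverse width $2^{-k/2}$, over which $\g_k^{ij}$ is essentially constant. This sets the stage for a wave packet parametrix in the spirit of \cite{bssschrod,smithsogge06}; its construction, the management of Neumann/Dirichlet boundary conditions through the even/odd extension \eqref{neumanncoord}, and the control of its error terms via the refined local smoothing estimates \eqref{homognrgdk} are the core technical labor that subsequent sections must carry out.
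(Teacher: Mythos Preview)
Your high-level plan---semiclassical rescaling to unit time, Duhamel against the regularized propagator, and a wave-packet parametrix whose error is controlled by the refined local smoothing bounds \eqref{homognrgdk}---is the paper's route, and your disposal of \eqref{uklredux} via the cone-supported case (which the paper handles by citing \cite[Lemma~2.2]{bssschrod}) is also correct.

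Your account of the third term $2^{-k}\|\phi v\|_{L^2_{2^{-k+1}}L^2}$, however, is misplaced. The forcing $(D_t+\Delta_{\g_k})v_k$ already sits on the right of \eqref{ukredux}; decomposing it into $P_k[\Delta_\g,\phi]v$, commutators $[\Delta_{\g_k},P_k](\phi v)$, and regularization errors is not part of \emph{proving} the theorem---that is the reduction \emph{from} \eqref{phiv} \emph{to} the theorem, carried out in the paragraphs following the statement. The actual role of the $\|\phi v\|$ term is spatial, not spectral: $v_k$ is a Fourier multiplier applied to the compactly supported $\phi v$ and hence is not compactly supported; its tails outside $\{|x|\le 3/2\}$ obey the decay \eqref{balldecay}. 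The refined local smoothing \eqref{homognrgdk} lives on $\Omega$ and can only be invoked (after pullback) where the extended coefficients coincide with the genuine pullback metric, i.e.\ on $\{|x|\le 2\}$. The third term absorbs precisely these tails when one reintroduces a spatial cutoff $\widetilde\phi$ to return to that region---see Proposition~\ref{thm:nrgcorollary}. Without this mechanism there is no bridge between the $\RR^n$ problem with artificially extended coefficients and the $\Omega$-based smoothing estimates that drive the whole argument.
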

To see that these estimates are sufficient, first observe that by time translation and taking a sum over the $\mathcal{O}(2^k)$ intervals in $[-T,T]$ of size $2^{-k+1}$ we may replace the norms $L^p_{2^{-k}} L^q$ and $L^2_{2^{-k+1}} L^2$ in \eqref{ukredux} by $L^p_{T} L^q$ and $L^2_{T} L^2$ respectively.  The same holds for \eqref{uklredux}.  We then use \eqref{littpaleyest} and observe that almost orthogonality and the assumption $s \in [0,\frac 12)$ gives the bound
\begin{multline*}
\sum_{k=0}^\infty \left(2^{2k(s+\frac 12)}\|v_{k}\|_{L^2_T L^2}^2 + 2^{2k(s-1)}\|\phi v\|_{L^2_{T} L^2}^2\right)
 + \sum_{k=0}^\infty \sum_{ l=k+2}^\infty 2^{2l(s+\frac 12)} \|v_{k,l}\|_{L^2_T L^2}^2\\
\lesssim \|\phi v\|_{L^2_T H^{s+\frac 12}}
\end{multline*}
We now turn to the square sum over the $2^{l(s-\frac 12)}\|(D_t + \Delta_{\g_l})v_{k,l}\|_{L^2_{T} L^2}$, which we claim is bounded by the right hand side of \eqref{phiv}.  The analogous one over the $(D_t + \Delta_{\g_k})v_{k}$ is easier and follows similarly.  Let $\beta_{k,l}$ abbreviate the operator $\beta_{k}(|D'|)\beta_l(|D_n|)$
$$
(D_t + \Delta_{\g_l})v_{k,l} = (\Delta_{\g_l}-\Delta_\g)v_{k,l} + [\Delta_\g, \beta_{k,l}]\phi v + \beta_{k,l}(D_t+\Delta_\g)\phi v
$$
To control the term involving $(\Delta_{\g_l}-\Delta_\g)v_{k,l}$, we use \eqref{firsttruncest} and that $\prtl_i\varrho, \prtl_i\g^{ij} \in L^\infty$ to obtain
$$
\|\prtl_i((\varrho_l \g^{ij}_l - \varrho\g^{ij})\prtl_j v_{k,l})\|_{L^2_{T} L^2} \lesssim \|\prtl_j v_{k,l}\|_{L^2_{T} L^2} + 2^{-l}\|\prtl_i\prtl_j v_{k,l}\|_{L^2_{T} L^2} \lesssim 2^l\|v_{k,l}\|_{L^2_{T} L^2} .
$$
Hence almost orthogonality gives that
$$
\sum_{k=0}^\infty \sum_{ l=k+2}^\infty 2^{2l(s-\frac 12)}\left(\|(\Delta_{\g_l}-\Delta_\g)v_{k,l}\|_{L^2_{T} L^2}^2 + \|\beta_{k,l}(D_t+\Delta_\g)\phi v\|_{L^2_T L^2}^2 \right)
$$
is dominated by the right hand side of \eqref{phiv}.

It remains to control the square sum over the $2^{l(s-\frac 12)}\|[\Delta_\g, \beta_{k,l}]\phi v\|_{L^2_T L^2}$, writing
\begin{multline*}
[\Delta_\g, \beta_{k,l}]\phi v = D_i\Big([\g^{ij},\beta_l(|D_n|)]\beta_k(|D'|)D_j(\phi v)\Big)\\ + D_i\Big(\beta_l(|D_n|)[\g^{ij},\beta_k(|D'|)]D_j(\phi v)\Big) + [\varrho^{-1}(D_i\varrho) \g^{ij}, \beta_{l,k}]D_j(\phi v) ,
\end{multline*}
using the summation convention in $i,j$.
This in turn reduces to the 3 bounds
\begin{align}
\sum_{l=k+2}^\infty 2^{2l(s-\frac 12)}\|[\g^{ij},\beta_l(|D_n|)]\,\prtl_j \beta_k(|D'|)(\phi v)\|_{L^2_T H^1}^2 &\lesssim \|\beta_k(|D'|)(\phi v)\|_{L^2_T H^{s+\frac 12}}^2\label{lkhin}
\\
\sum_{k=1}^\infty \|[\g^{ij},\beta_k(|D'|)]\,\prtl_j( \phi v)\|_{L^2_T H^{s+\frac 12}}^2 &\lesssim \|\phi v\|_{L^2_T H^{s-\frac 12}}^2\label{kkhin}
\\
\|\varrho^{-1}(\prtl_i\varrho) \g^{ij}\prtl_j\phi v\|_{L^2_T H^{s-\frac 12}} &\lesssim \|\phi v\|_{L^2_T H^{s+\frac 12}}\label{czbound}
\end{align}
We begin with the last inequality.  Multiplication by the function $\varrho^{-1}(\prtl_i\varrho) \g^{ij}$ is bounded on $H^r(\RR^n)$ for any $0 \leq r < \frac 12$.  This can be seen by the fact that $\prtl_i \varrho$ defines a Calderon-Zygmund type multiplier in $x_n$ and $\langle \xi \rangle^{\frac 12 -\veps}$ defines an $A_2$ weight in one dimension and that the other factors are Lipschitz.  By duality and the assumption that $-\frac 12 < s-\frac 12 < 0$, we have the desired bound \eqref{czbound}.

For \eqref{lkhin} and \eqref{kkhin} we will use Khinchin's inequality and the following fact about the commutator of a Lipschitz function $a$ with a Fourier multiplier $R$ whose symbol lies in $S^z_{1,0}$
\begin{align*}
&[a,R]:H^{z-1}(\RR^m) \to L^2(\RR^m), &  0 \leq z \leq 1,\\
&[a,R]:H^z(\RR^m) \to H^1(\RR^m), &  -1 \leq z \leq 0.
\end{align*}
This was observed in \cite[(2.11), (2.12)]{blairtams} as a consequence of the Coifman-Meyer commutator theorem.  For \eqref{lkhin}, consider an arbitrary sequence $\veps_l = \pm 1$ and let $R=\sum_{l=k+2}^m \veps_{l}2^{l(s-\frac 12)}\beta_{l}(|D_n|)$ where $m > k+2$ is arbitrary.  The operator $R$ is thus a symbol of order $z=s-\frac 12 \in (-\frac 12, 0]$ and hence uniformly in $m$ we have
$$
\|[\g^{ij},R]\,\prtl_j \beta_k(|D'|)(\phi v)\|_{L^2_T H^1}^2 \lesssim \|\prtl_j \beta_k(|D'|)(\phi v)\|_{L^2_T H^{s-\frac 12}}^2.
$$
The bound \eqref{lkhin} now a consequence of Khinchin's inequality.  The remaining bound \eqref{kkhin} follows from similar considerations, this time setting $R=\sum_{k} \veps_{k}\beta_{k}(|D'|)$ (which defines a symbol in $S^0_{1,0}$) and observing that $[\g^{ij},R]:H^{s-\frac 12} \to H^{s+\frac 12}$, a consequence of interpolating the $H^{-1} \to L^2$ and $L^2 \to H^1$ bounds above.

Now that Theorem \ref{thm:stz} is reduced to the bounds \eqref{ukredux}, \eqref{uklredux}, we observe that the latter is a consequence of \cite[Lemma 2.2]{bssschrod}.  Indeed, each $\widehat{v_{k,l}}(t,\cdot)$ is supported in a cone $|\xi'| \leq \frac 32 |\xi_n|$ and hence a semiclassical rescaling $t \mapsto 2^{-l}t$ shows that \eqref{uklredux} shows that follows by taking $\mu = 2^l$ in that lemma (in fact, this bound holds for any $s \geq 0$).

The remainder of this work thus develops the bounds \eqref{ukredux} on $v_k$.  We label $\l = 2^k$ as the frequency scale where $v_k$ is localized. We similarly perform a semiclassical rescaling $t\mapsto  \lambda^{-1} t$ and set $u_\l(t,x) = v_k(\l^{-1} t,x)$, $u(t,x) = \phi v(\l^{-1} t,x)$.  Moreover, we relabel the $\g_k$ as $\g_\l$ so that the Fourier support of the regularized metric is in $\{|\xi| \lesssim \l\}$.  Rescaling \eqref{ukredux} reduces matters to showing that
\begin{equation}\label{uksemi}
\|u_\l\|_{L^p_{1} L^q} \lesssim
\l^{s+\frac 1p} \left(\|u_\l\|_{L^2_{2} L^2} + \|(D_t + \l^{-1}\Delta_{\g_\l})u_\l\|_{L^2_{2} L^2}+ \l^{-\frac {3}2}\|u\|_{L^2_{2} L^2}  \right),
\end{equation}
where we recall that by our convention $L^p_1 L^q$, $L^2_2 L^2$ abbreviate $L^p([-1,1];L^q(\RR^n))$, $L^2([-2,2];L^2(\RR^n))$ respectively.

\subsection{Local smoothing estimates}\label{sec:lsadjust} Here we record a consequence of the local smoothing estimates assumed in the hypothesis of Theorem \ref{thm:stz}.  In what follows, we will take $\chi_j=\chi_j(x_n)$ to be even extensions of the functions in \eqref{locsmoothchi}, that is, $\supp(\chi_j)\subset \{|x_n|\leq 2^{-j+1}\}$ and $\chi_j(x_n) = 1$ whenever $|x_n| \leq 2^{-j}$.  This is because $x_n  = d(x,\prtl \Omega)$ with respect to the metric $\g^{ij}$ when $x_n > 0$ and $|x|\leq 2$.  Moreover, it will be convenient to consider the operator $P_\l$  given by
\begin{equation}\label{Plambdadef}
P_\l u_\l = \l^{-1}\sum_{ij} D_i(\g^{ij}_\l D_j u_\l), \text{ and set } F_\l = (D_t + P_\l)u_\l.
\end{equation}
Since $\l^{-1}\Delta_{\g_\l}-P_\l$ involves only first order derivatives it suffices to prove \eqref{uksemi} with $(D_t + \l^{-1}\Delta_{\g_\l})u_\l$ replaced by $F_\l$.

\begin{proposition}\label{thm:nrgcorollary}
Suppose $u_\l$ is a solution to $(D_t + P_\l)u_\l = F_\l$ satisfying
$$
\supp(\widehat{u_\l}(t,\cdot)) \subset \{\xi: |\xi|\approx \l\},
$$
the boundary condition \eqref{neumanncoord}, and the decay estimate \eqref{balldecay} (with $u_\l$, $u$ replacing $v_k$, $\phi v$).  Then if $2^{-j} \in [\l^{-\frac 23}, 1]$, we have
\begin{equation}\label{locnrgchi}
\|\chi_j u_\l\|_{L^2_1 L^2} \lesssim 2^{-\frac j4}\left(
\|u_\l\|_{L^2_{2} L^2} + \|F_\l\|_{L^2_{2} L^2}+ \l^{-\frac {3}2}\|u\|_{L^2_{2} L^2} \right) .
\end{equation}
\end{proposition}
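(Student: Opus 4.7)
The plan is to view $u_\l|_\Omega$ as a perturbation of a solution to the intrinsic semiclassical Schr\"odinger equation on $\Omega$ and then invoke the hypothesis \eqref{homognrgdk} via a Duhamel representation. Because $u_\l$ is obtained by even reflection across $\prtl\Omega$, its restriction to $\Omega$ satisfies the Neumann boundary condition, and on $\Omega$ it solves
\begin{equation*}
(D_t + \l^{-1}\Delta_\g)u_\l = F_\l + Eu_\l, \qquad E := \l^{-1}\Delta_\g - P_\l.
\end{equation*}
By \eqref{firsttruncest}, the top-order coefficients of $E$ are of size $\l^{-1}$ (arising from $\g^{ij}-\g^{ij}_\l$) and the lower-order ones are uniformly bounded; combined with the Fourier localization $|\xi|\approx\l$ of $u_\l$, this yields the uniform-in-$t$ bound $\|Eu_\l(t,\cdot)\|_{L^2(\Omega)} \lesssim \|u_\l(t,\cdot)\|_{L^2(\Omega)}$.

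The mean value theorem applied to $t \mapsto \|u_\l(t,\cdot)\|_{L^2(\Omega)}$ supplies $t_0 \in [-2,-1]$ with $\|u_\l(t_0,\cdot)\|_{L^2(\Omega)} \lesssim \|u_\l\|_{L^2_2 L^2}$. Duhamel's formula on $\Omega$ then gives, for $t \in [-1,1]$,
\begin{equation*}
u_\l(t) = e^{-i(t-t_0)\l^{-1}\Delta_\g}u_\l(t_0) - i\int_{t_0}^t e^{-i(t-s)\l^{-1}\Delta_\g}\bigl(F_\l + Eu_\l\bigr)(s)\,ds.
\end{equation*}
For the first summand, a semiclassically rescaled form of \eqref{homognrgdk} (in which the $\l^{-1/2}$ factor there is absorbed by the Jacobian of the time change) bounds $\|\chi_j e^{-i(\cdot-t_0)\l^{-1}\Delta_\g}u_\l(t_0)\|_{L^2_t L^2} \lesssim 2^{-j/4}\|u_\l(t_0)\|_{L^2}$. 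For the Duhamel integral, Minkowski's inequality in $s$, followed by the same homogeneous local smoothing applied to each integrand $e^{-i(t-s)\l^{-1}\Delta_\g}(F_\l+Eu_\l)(s)$ viewed as a homogeneous solution, and Cauchy--Schwarz in $s \in [t_0,1]$, yield the bound $2^{-j/4}\bigl(\|F_\l\|_{L^2_2 L^2} + \|u_\l\|_{L^2_2 L^2}\bigr)$.

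The summand $\l^{-3/2}\|u\|_{L^2_2 L^2}$ on the right accommodates lower-order remainders stemming from two places: first, inserting a compactly supported spatial cutoff (say on $|x| \leq 2$) to work cleanly on $\Omega$, which by \eqref{balldecay} costs at most $\l^{-N}\|u\|$ for any $N$; and second, reconciling the extrinsic Euclidean localization $|\xi|\approx\l$ of $u_\l$ with the intrinsic spectral localization for $\sqrt{\Delta_\g}$ on $\Omega$ implicit in \eqref{homognrgdk}. Both effects are bounded by $\l^{-N}\|u\|$ for arbitrarily large $N$ and are absorbed by taking $N \geq 3/2$.

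The principal technical difficulty is this last transfer: the hypothesis \eqref{homognrgdk} is stated in terms of the intrinsic functional calculus of $\Delta_\g$ on $\Omega$, while $u_\l$ carries only Euclidean frequency information. Justifying compatibility requires a pseudodifferential or finite-propagation-speed argument exploiting that $\Omega$ is exterior to a compact set and $\g = \delta$ at infinity, to see that a Euclidean Littlewood--Paley projector at scale $\l$ differs from a spectral projector for $\sqrt{\Delta_\g}$ at the same scale by an operator smoothing to any order in $\l$.
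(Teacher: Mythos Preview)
Your overall Duhamel strategy---localize in space, view $u_\l$ as an inhomogeneous solution of the intrinsic equation, and feed the forcing into the rescaled homogeneous estimate \eqref{homognrgdk}---matches the paper's approach. The paper also uses a spatial cutoff $\widetilde{\phi}$ (handled via \eqref{balldecay}) and a time cutoff $\eta$ supported in $(-2,2)$, so that the Duhamel representation starts from zero data at $s=-2$; your mean-value choice of $t_0$ is a minor variant. The estimate on the forcing $(D_t\eta)\widetilde{\phi}u_\l + \eta[\l^{-1}\Delta_\g,\widetilde{\phi}]u_\l + \widetilde{\phi}\eta(\l^{-1}\Delta_\g - P_\l)u_\l + \widetilde{\phi}\eta F_\l$ is exactly parallel to your bound on $F_\l + Eu_\l$.

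The substantive divergence is in how the frequency-transfer issue is resolved. You correctly flag that \eqref{homognrgdk} only controls the piece $\widetilde{\beta}(-\l^{-1}D_t)\chi_j w$, and that one must account for the complementary time-frequencies. Your proposal is to argue that a Euclidean Littlewood--Paley projector at scale $\l$ agrees with a spectral projector for $\sqrt{\Delta_\g}$ up to an $O(\l^{-N})$ error. On a domain with boundary this is delicate: near $\prtl\Omega$ the spectral projector carries boundary-layer corrections that are not captured by a Euclidean multiplier, and there is no off-the-shelf result giving arbitrary-order smoothing for the difference. At best this would require a separate and nontrivial argument.

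The paper sidesteps this entirely by working in \emph{space-time} phase space. After the time cutoff, the piece $(1-\widetilde{\beta}(-\l^{-1}D_t))\eta u_\l$ lives where $\tau$ is away from scale $\l$ while, by hypothesis, $|\xi|\approx \l$. On that region the symbol $\tau + \l^{-1}\sum \g^{ij}_{\sqrt{\l}}\xi_i\xi_j$ is elliptic, and a parametrix in the class $S^0_{1,1/2}$ (after smoothing the coefficients to scale $\l^{1/2}$) gives
\[
\|(1-\widetilde{\beta}(-\l^{-1}D_t))\eta u_\l\|_{L^2(\RR^{n+1})} \lesssim \l^{-1/2}\bigl(\|F_\l\|_{L^2_2 L^2} + \|u_\l\|_{L^2_2 L^2}\bigr),
\]
which is even stronger than needed. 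This elliptic-regularity step is the key idea you are missing; it uses only the Euclidean localization $|\xi|\approx \l$ that you already have, and requires no comparison of functional calculi.
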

\begin{proof}
Let $\widetilde{\phi}$ be a smooth cutoff which is identically 1 on $\{x: |x| \leq 3/2 \}$ and supported in $\{x: |x| \leq 2\}$.  Also take $\widetilde{\phi}$ to be even in $x_n$ and independent of $x_n$ near $x_n=0$.  The decay condition \eqref{balldecay} implies that $(1-\widetilde{\phi})u_\l$ satisfies $$
\|(1-\widetilde{\phi})u_\l\|_{L^2_1 L^2 } \lesssim \l^{-\frac 32} \|u\|_{L^2_1 L^2},
$$
meaning it suffices to bound $\widetilde{\phi}u_\l$.  Let $\eta(t)$ be a smooth cutoff identically one on $[-1,1]$, supported in $(-2,2)$ and set  $w=\widetilde{\phi}\eta u_\l$ so that $\|\widetilde{\phi}u_\l\|_{L^2_1 L^2} \lesssim \|w\|_{L^2(\RR^{n+1})}$.  Also let $\widetilde{\beta} \in C_c((0,\infty))$ be a smooth bump function identically one on $\supp(\beta_1)$ (as defined in \eqref{littpaleyseq}).

The function $w$ is even with respect to $x_n$, meaning its values are determined by points $(t,x)$ for which $x_n \geq 0$.  Also, the restriction of $w(t,\cdot)$ to $x_n\geq 0$ can be pulled back to the domain $\Omega$ and in these coordinates, the action of the flat Laplacian on $w(t,\cdot)$ is the same as that of $\Delta_\g$.  Therefore, rescaling the homogeneous local smoothing estimates \eqref{homognrgdk} with $t \mapsto \l^{-1}t$ and Duhamel's principle applied to Cauchy problem with initial time slice $s=-2$, gives
\begin{align*}
\|\widetilde{\beta}(-\l^{-1}D_t) \chi_j w\|_{L^2(\RR^{n+1})} \lesssim 2^{-\frac j4}\|(D_t + \l^{-1}\Delta_\g) w\|_{L^1(\RR; L^2(\RR^n))}.
\end{align*}
Now observe that
\begin{equation*}
(D_t + \l^{-1}\Delta_\g) w = (D_t\eta)\widetilde{\phi}u_\l + \eta[\l^{-1}\Delta_\g,\widetilde{\phi}]u_\l + \widetilde{\phi}\eta(\l^{-1}\Delta_\g-P_\l)u_\l +\widetilde{\phi}\eta F_\l.
\end{equation*}
By the frequency localization of $u_\l$, the estimates \eqref{firsttruncest}, and compact support of the $\eta(t)$, we thus have
$$
\|(D_t + \l^{-1}\Delta_\g) w \|_{L^1(\RR; L^2(\RR^n))} \lesssim \|u_\l \|_{L^2_{2} L^2} + \|F_\l\|_{L^2_{2} L^2}.
$$

It now remains to handle estimates on $(1-\widetilde{\beta}(-\l^{-1}D_t))\eta u_\l$. We use the approach in \cite[Lemma 2.3]{smithsqC2} and \cite[Proposition 2.2]{blairtams} which involves microlocal elliptic regularity.  Let $\tau$, $\xi$ denote Fourier variables dual to $t$, $x$ respectively. Also let $\g^{ij}_{\sqrt{\l}}$ denote the result of truncating the $\g^{ij}$ to frequencies less than $\l^{\frac 12}$, which satisfies the following analog of \eqref{firsttruncest}
\begin{equation}\label{elliptictruncest}
|\g^{ij}_\l(x) - \g^{ij}_{\sqrt{\l}}(x)| \lesssim \l^{-\frac 12}, \qquad |\prtl_x^\beta
\g^{ij}_{\sqrt{\l}} (x)| \lesssim \l^{\frac 12 \max(0,|\beta|-1)}.
\end{equation}
Moreover, let $P_{\sqrt{\l}}$ denote the operator defined by replacing the $\g_\l$ in \eqref{Plambdadef} with the $\g^{ij}_{\sqrt{\l}}$.  Now set
$$
q_\l(x,\tau,\xi) := \frac{(1-\widetilde{\beta}(-\l^{-1}\tau))\widetilde{\beta}(\lambda^{-1}\xi)}{\tau + \lambda^{-1}\sum_{ij}\g^{ij}_{\sqrt{\l}}\xi_i \xi_j},
$$
which is well defined as we may assume that the support of the numerator is disjoint with the zero set of the denominator by taking $c_0$ sufficiently small in \eqref{c0}.  We have that $\l q_\l \in S^0_{1,\frac 12}$ uniformly in $\l$.  The symbolic calculus furnishes a pseudodifferential operator $r_\l(x,D_{t,x})$ with symbol also satisfying $\l r_\l \in S^0_{1,\frac 12}$ such that
$$
(1-\widetilde{\beta}(-\l^{-1}D_t))\eta u_\l = q_\l(x,D_{t,x})(D_t+P_{\sqrt{\l}})\eta u_\l + r_\l(x,D_{t,x})\eta  u_\l.
$$
Using \eqref{elliptictruncest}, $(1-\widetilde{\beta}(-\l^{-1}D_t))\eta u_\l$ satisfies the much stronger estimate
\begin{align*}
\|(1-\widetilde{\beta}(-\l^{-1}D_t))\eta u_\l\|_{L^2(\RR^{n+1})} &\lesssim \l^{-1}\left( \|(D_t+P_{\sqrt{\l}})\eta u_\l\|_{L^2(\RR^{n+1})} +\|\eta u_\l\|_{L^2(\RR^{n+1})} \right)\\
&\lesssim \l^{-\frac 12}\left( \|(D_t+P_\l)u_\l\|_{L^2_2L^2} + \|u_\l\|_{L^2_2L^2} \right)
\end{align*}
where the compact support of $\eta $ is used in the last inequality.
\end{proof}

\subsection{The tangential/nontangential decomposition}
We now begin the discussion of the proof of \eqref{uksemi}.  We are now solely concerned with estimates on $u_\l$, so the notation $v$, $w$, $v_j$ will take on a new meaning for the remainder of the paper.  A crucial step will be a decomposition of the solution $u_\l = v + w$ where the microlocal support of $v(t,\cdot)$ is concentrated in the set \begin{equation}\label{glsupp}
\{\;|\xi_n| \lesssim \l (\l^{-2\alpha} + x_n^2)^{1/4} \},
\end{equation}
and the microlocal support of $w(t,\cdot)$ is concentrated in the set
\begin{equation}\label{ntsupp}
\{\;|\xi_n| \gg \l (\l^{-2\alpha} + x_n^2)^{1/4} \}.
\end{equation}
Here $\alpha < 2/3$ is a parameter which will be chosen below in \eqref{alphachoice}.  The motivation for such a decomposition comes from the bicharacteristics of the equation, that is, the solutions to
$$
\dot{x}_j(t) = 2\g^{ij}\xi_i, \qquad \dot{\xi}_l(t) = -\prtl_l \g^{ij}\xi_i\xi_j.
$$
If a generalized bicharacteristic curve intersects the set \eqref{ntsupp}, it will essentially behave linearly within that set (up to reflections) in the sense that its linear approximation is reasonably accurate.  On the other hand, since the boundary of $\Omega$ is concave, $-\prtl_n \g^{ij}$ defines a positive definite form on vectors $(\xi_1,\dots,\xi_{n-1})$.   Therefore as curves pass through \eqref{glsupp} they will more or less display parabolic behavior.  By this we mean that $x_n(t)$ is convex and that its acceleration is nontrivial.

The function $v$ will be well-suited for a decomposition with respect to distance to the boundary.  Here a key observation is that components of the function which are separated from the boundary will satisfy better Strichartz estimates than those close to the boundary.  This will be counterbalanced by the local smoothing estimates in Proposition \ref{thm:nrgcorollary}, which are arranged so that components of the solution close to the boundary satisfy better smoothing estimates.  On the other hand, $w$ will be well suited for a further decomposition in frequency.  Each component will be microlocalized to a cone of direction vectors, all of which more or less form a common angle to the boundary.  This was a key feature of the approach in \cite{smithsogge06} and subsequently \cite{bssschrod}.  The main idea here is that components whose momentum is concentrated along rays which form a large angle to the boundary satisfy better estimates than components concentrated along rays forming a smaller angle.  However, the losses will once again be counterbalanced by the local smoothing estimates.

For reasons which will be evident later on (see \eqref{lqdispersive}), we define $\sigma(p,q)$
\begin{equation}\label{sigmachoice}
\sigma(p,q):= \begin{cases}
\frac n2 -\frac nq-\frac 2p, & \text{ when  }\;\frac{n-1}{2}(1-\frac 2q) \leq \frac 2p \leq \frac n2 -\frac nq,\\
\frac 12 -\frac 1q, & \text{ when  }\;\frac 2p \leq \frac{n-1}{2}(1-\frac 2q).
\end{cases}
\end{equation}
The precise form of $\sigma(p,q)$ is not all that important in the present work; the crucial feature is that $\sigma(p,q) >0$ if and only if $\frac 2p + \frac nq < \frac n2$.  In this case, we choose $\alpha$ strictly less than, but sufficiently close to, 2/3 so that
\begin{equation}\label{alphachoice}
\frac 1{3\alpha} - \frac{1}{2} < \sigma(p,q) .
\end{equation}
When $\frac 2p + \frac nq = \frac n2$, the difference $\frac 16 - \frac{\alpha}{4}$ will dictate the loss of derivatives in the estimate, hence we take $\alpha<\frac 23$ to make this difference as small as desired.  Taking $\alpha < 2/3$ (rather than $\alpha = 2/3$) will allow us to easily estimate the error which arises by commuting the equation with the microlocal cutoffs to \eqref{glsupp} and \eqref{ntsupp}.  It also ensures that the wave packet parametrix in \S\ref{sec:wpparam} has a bounded error term.  To this end, we pause to observe that taking $\delta=1-\frac{3\alpha}{2}>0$ means that
\begin{equation}\label{calculus}
\l\theta^3 \geq \l^{\delta} \qquad \text{whenever }\;\theta \geq \l^{-\frac{\alpha}{2}}.
\end{equation}

Let $J_\alpha$ be the largest integer such that $2^{-J_\alpha} \geq \l^{-\alpha}$. For $1 \leq j < J_\alpha$, let
\begin{equation}\label{psijdef}
\psi_j(x_n)=\chi_j(x_n)-\chi_{j+1}(x_n), \quad \text{so that} \quad \supp(\psi_j) \subset \{2^{-j}\leq |x_n|\leq 2^{1-j} \}.
\end{equation}
with $\chi_j$ as defined at the beginning of \S\ref{sec:lsadjust}.  Consequently,
$
\chi_0 = \chi_{l} + \sum_{j=0}^{l-1} \psi_l.
$
It will suffice to prove estimates on $\chi_0 u_\l$ since \eqref{balldecay} will yield estimates on $(1-\chi_0 )u_\l$.

Now take a sequence of smooth cutoffs $\{ \Gamma_j\}_{j=1}^{J_\alpha}$ to be applied in the frequency domain such that
$
\sum_{j=1}^{J_\alpha} \Gamma_j(\xi_n)\equiv 1$, with $\supp(\Gamma_{J_\alpha}) \subset \{|\xi_n|\lesssim \l 2^{-\frac{J_\alpha}{2}}\}$ and $\supp(\Gamma_j) \subset \{|\xi_n|\approx \l 2^{-j/2} \}$ for $ 2\leq j < J_\alpha$.  We define $v_{J_\alpha} = \Gamma_{J_\alpha}(\chi_{J_\alpha}u_\l)$ and
$$
w_j = \Gamma_j(D_n)(\chi_ju_\l), \qquad 1 \leq j < J_\alpha,
$$
$$
v_j = \sum_{l=j+1}^{J_\alpha} \Gamma_l(D_n) (\psi_j u_\l), \qquad 1 \leq j < J_\alpha.
$$
Using the properties above, we have that
\begin{align*}
\chi_0 u_\l &= \sum_{l=1}^{J_\alpha} \Gamma_l(D_n)\left( \chi_0 u_\l \right)= \sum_{l=1}^{J_\alpha} \Gamma_l(D_n) \left(\chi_l u_\l + \sum_{j=0}^{l-1}\psi_j u_\l\right)=\sum_{l=1}^{J_\alpha -1} w_l + \sum_{j=1}^{J_\alpha } v_j
\end{align*}
where the last equality follows from a change in the order of summation in $j$ and $l$.

The Fourier support of  $v_j$ is such that
$$
\supp (\widehat{v}_j (t,\cdot) ) \subset \{\;|\xi_n| \lesssim \l2^{-j/2} \},
$$
while the spatial support is concentrated (but not sharply localized) in $\{|x_n| \approx 2^{-j}\}$ as $\l^{-1}2^{\frac j2} \ll 2^{-j}$.  Thus the quantity $2^{-j}$ in some sense dictates the distance to the boundary while $2^{-\frac j2}$ bounds the angle $\xi$ forms with the hyperplane $\xi_n =0$.  On the other hand, the Fourier support of $w_j$ is such that
$$
\supp (\widehat{w}_j (t,\cdot) ) \subset \{\;|\xi_n| \approx \l2^{-j/2} \},
$$
while the spatial support is concentrated in $\{|x_n| \lesssim 2^{-j}\}$.

Recall that we want to prove the following variation on \eqref{uksemi} involving $P_\l$, $F_\l$ as defined in \eqref{Plambdadef},
\begin{equation}\label{Plsemi}
\|u_\l\|_{L^p_{1} L^q} \lesssim
\l^{s+\frac 1p} \left(\|u_\l\|_{L^2_{2} L^2} + \|F_\l\|_{L^2_{2} L^2}+ \l^{-\frac {3}2}\|u\|_{L^2_{2} L^2}  \right).
\end{equation}
We begin by considering the $w_j$.  Let
\begin{equation}\label{ntangdrive}
G_j:=(D_t + P_\l)w_j = [P_\l,\Gamma_j]\chi_ju_\l + \Gamma_j[P_\l,\chi_j] u_\l + \Gamma_j\chi_jF_\l.
\end{equation}
In section \S\ref{sec:wpparam}, we will survey the wave packet parametrix from \cite{bssschrod} and see that it yields the Strichartz estimate
\begin{equation}\label{ntangstz}
\|w_j\|_{L^p_1 L^q} \lesssim \lambda^{s+\frac 1p}2^{-\frac j2 \sigma(p,q)} \left(2^{\frac j4} \|w_j\|_{L^2_1 L^2} + 2^{-\frac j4} \|G_j\|_{L^2_1 L^2} \right).
\end{equation}
with $\frac 2p +\frac nq = \frac n2 -s$. Thus we need the local smoothing estimate
\begin{equation}\label{ntangdk}
2^{\frac j4} \|w_j\|_{L^2_1 L^2 } +
2^{-\frac j4} \|G_j\|_{L^2_1 L^2 }
\lesssim\|u_\l\|_{L^2_{2} L^2 } +
\|F_\l \|_{L^2_{2} L^2 } + \l^{-\frac 32}\|u\|_{L^2_2L^2}.
\end{equation}
If this holds, then we may use that $\sigma(p,q)>0$ when $p,q$ are subcritical to see that $\sum_j \|w_j\|_{L^p L^q}$ is bounded by the right hand side of \eqref{Plsemi}.  Otherwise, when $p,q$ are critical, this sum generates an acceptable logarithmic loss in $\l$.

Proposition~\ref{thm:nrgcorollary} gives that
$$
2^{\frac j4}\|w_j\|_{L^2_1 L^2 } \lesssim 2^{\frac j4}\|\chi_{j}u_\l\|_{L^2_1 L^2 } \lesssim \|u_\l\|_{L^2_{2} L^2 } +
\|F_\l \|_{L^2_{2} L^2 }+ \l^{-\frac 32}\|u\|_{L^2_2L^2}.
$$
and hence it suffices to estimate $G_j$.  The term involving $\Gamma_j\chi_jF_\l$ is trivial to handle.  To estimate the term $[P_\l,\Gamma_j]\chi_j u_\l$ in \eqref{ntangdrive} consider any coefficient $\g^{lm}_\l$ of $P_\l$.  For any function $f(y',y_n)$
\begin{equation}\label{commutatorintegral}
\left([\g^{lm}_\l,\Gamma_j]f\right)(x) = \int \left( \g^{lm}_\l(x',x_n)-\g^{lm}_\l(x',y_n)\right) \check{\Gamma}_j\left(x_n-y_n\right) f(x',y_n)\,dy_n.
\end{equation}
We may assume that $|\check{\Gamma}_j(z)|\lesssim \l 2^{-\frac j2}(1+\l 2^{-\frac j2}|z|)^{-N} $.  Since $\g^{lm}_\l$ is uniformly Lipschitz (see \eqref{firsttruncest}), the mean value theorem and the generalized Young inequality show that this operator on gives rise to a gain of $\l^{-1}2^{\frac j2}$ when acting on $L^2$.  Furthermore, $[P_\l,\Gamma_j]=-\l^{-1}[\g^{lm}_\l,\Gamma_j]\prtl^2_{lm}-\l^{-1}[\prtl_l\g^{lm}_\l,\Gamma_j]\prtl_{m}$ where the sum occurs only over tangential derivatives $\prtl_l$, $l=1,\cdots,n-1$ and the second operator is of lower order.  Hence
$$
2^{-\frac j4}\|[P_\l,\Gamma_j]\chi_j u_\l\|_{L^2_1 L^2 } \lesssim \l^{-2}2^{\frac j4} \|\prtl^2_{lm}(\chi_{j}u_\l)\|_{L^2_1 L^2 } + \|\chi_{j}u_\l\|_{L^2_1 L^2 }  \lesssim 2^{\frac j4} \|\chi_{j}u_\l\|_{L^2_1 L^2 },
$$
and the term on the right hand side can be estimated by Proposition~\ref{thm:nrgcorollary}.  To handle the term $\Gamma_j[P_\l,\chi_j] u_\l$ in \eqref{ntangdrive}, we write the commutator as
$$
[P_\l,\chi_j]u_\l = -2\prtl_n\left((\prtl_n\chi_j) u_\l\right) + (\prtl_n^2\chi_j) u_\l.
$$
The Fourier multiplier $\l^{-1}2^{\frac j2}\Gamma_j(\xi_n)\xi_n$ defines a uniformly bounded operator on $L^2$ and hence
$$
2^{-\frac j4}\|\Gamma_j[P_\l,\chi_j] u_\l\|_{L^2_1 L^2 } \lesssim 2^{\frac j4}\|\chi_{j-1}u_\l\|_{L^2_1 L^2 } + \l^{-1}2^{\frac{7j}{4}} \|\chi_{j-1}u_\l\|_{L^2_1 L^2 }.
$$
Since $\l^{-1}2^{\frac{3j}{2}}\ll 1$ (see \eqref{calculus}), the rest of \eqref{ntangdk} follows from Proposition \ref{thm:nrgcorollary}.

We now consider estimates on the $v_j$. Set $\widetilde{\Gamma}_j = \sum_{j+1}^{J_\alpha}\Gamma_l $ so that
$$
v_j = \widetilde{\Gamma}_j(D_n)(\psi_j u_\l), \qquad v_{J_\alpha} = \Gamma_{J_\alpha}(D_n)(\chi_{J_\alpha} u_\l),
$$
and we may assume $\left|\frac{d^m \widetilde{\Gamma}_j}{d\xi_n}\right|\lesssim (\l2^{-\frac
j2})^{-m}$.  For convenience, define $H_j$ similarly as
\begin{equation*}
H_j:=(D_t + P_\l)v_j = [P_\l,\widetilde{\Gamma}_j]\chi_ju_\l + \widetilde{\Gamma}_j[P_\l,\chi_j] u_\l + \widetilde{\Gamma}_j\chi_jF_\l.
\end{equation*}
In \S\ref{sec:wpparam}, we will see that for $1 \leq j < J_\alpha$
\begin{equation}\label{strslabj}
\|v_j\|_{L^p_1 L^q} \lesssim
\lambda^{s+\frac 1p}2^{-\frac j2 \sigma(p,q)} \Big(2^{\frac j4}\|v_j \|_{L^2_1 L^2} + 2^{-\frac j4} \|H_j\|_{L^2_1 L^2} +\l^{-\frac 32}\|u_\l\|_{L^2_1 L^2}\Big),
\end{equation}
and when $j=J_\alpha$
\begin{equation}\label{strslab0}
\|v_{J_\alpha}\|_{L^p_1 L^q} \lesssim
\lambda^{s+\frac 1p}2^{-\frac{J_\alpha}{2} \sigma(p,q)} \left(\lambda^{\frac 16} \|v_{J_\alpha}\|_{L^2_1 L^2} +
\lambda^{-\frac 16} \|H_{J_\alpha}\|_{L^2_1 L^2} \right),
\end{equation}
with $\frac 2p + \frac nq = \frac n2 -s$ in both cases.

For any coefficient of $P_\l$ we may characterize
the commutator $[\g^{ij}_\l, \widetilde{\Gamma}_j]$ analogously to
\eqref{commutatorintegral}.  As before, the Fourier multiplier
$\l^{-1}2^{-\frac j2}\widetilde{\Gamma}_j(\xi_n)\xi_n$ defines a uniformly bounded operator on $L^2$. Therefore, when $1 \leq j \leq J_\alpha$, the
estimate
\begin{equation}\label{tangdk}
2^{\frac j4}\|v_j \|_{L^2_1 L^2}+ 2^{-\frac j4}\|H_j \|_{L^2_1 L^2} \lesssim \|u_\l\|_{L^2_{2} L^2 } + \|F_\l \|_{L^2_{2} L^2 }+ \l^{-\frac 32}\|u\|_{L^2_2L^2}.
\end{equation}
follows by the essentially the same arguments used to establish \eqref{ntangdk}.  The extra power of decay $2^{-\frac j2 \sigma(p,q)}$ in \eqref{strslabj} allows us to see once again that $\sum_{1}^{J_\alpha-1} \|v_j\|_{L^p L^q}$ is bounded by the right hand side of \eqref{Plsemi} when $\sigma(p,q)>0$.  Otherwise this sum generates a logarithmic loss.  The loss of $\l^{\frac 16}$ in \eqref{strslab0} is larger than the gain of $2^{-\frac{J_\alpha}{4}} \approx \l^{-\frac{\alpha}{4}}$ given by \eqref{tangdk}.  However, by the choice of $\alpha$ in \eqref{alphachoice},
\begin{equation*}
\l^{\frac 16}2^{-\frac{J_\alpha}{4} -\frac{J_\alpha}{2}\sigma(p,q)}\approx \lambda^{\frac 16 - \frac{\alpha}{4} - \frac{\alpha}{2}\sigma(p,q)} \leq
\begin{cases}
1 & \text{when  }\frac 2p + \frac nq < \frac n2\\
\lambda^{\frac 16 - \frac{\alpha}{4}}  & \text{when  }\frac 2p + \frac nq = \frac n2
\end{cases}.
\end{equation*}
Therefore, the estimate \eqref{tangdk} allows us to conclude
$$
\|v_{J_\alpha}\|_{L^p_1 L^q} \lesssim \l^{s+\frac 1p}\left(\|u_\l\|_{L^2_{2} L^2 } + \|F_\l \|_{L^2_{2} L^2 } + + \l^{-\frac 32}\|u\|_{L^2_2L^2}\right)
$$
when $\frac 2p + \frac nq < \frac n2$ and the same bound with a loss of $\frac 16 -\frac{\alpha}{4}$ derivatives when $\frac 2p + \frac nq = \frac n2$.

\subsection{Proving the Strichartz estimates}\label{sec:wpparam}
In this section, we discuss the proofs of \eqref{ntangstz}, \eqref{strslabj}, and \eqref{strslab0}.  We first sketch the proof of the bounds for $v_{J_\alpha}$ in \eqref{strslab0}, which are a mild adjustment of arguments in \cite[\S4]{bssschrod}.  The other estimates will follow by similar considerations.  Let $P_{\l^{2/3}}$ be the operator obtained by regularizing the coefficients of $P_\l$, truncating them to frequencies less than $\l^{\frac 23}$.  Given coefficients $\g_{\l}$, $\g_{\l^{2/3}}$ of $P_\l$, $P_{\l^{2/3}}$ respectively, we have
\begin{equation}\label{lambda23}
|\g_{\l}(x) - \g_{\l^{2/3}}(x)| \lesssim \l^{-\frac 23} \qquad \text{and} \qquad |\prtl^\beta_x \g_{\l^{2/3}}(x)| \lesssim \l^{\frac 23 \max(0,|\beta|-1)}.
\end{equation}
This means that it is sufficient to prove \eqref{strslab0} with $P_\l$ replaced by $P_{\l^{2/3}}$ as the error can be absorbed in to the term $\l^{\frac 16}\|v_{J_\alpha}\|_{L^2_\veps L^2}$.

We now dilate the problem in space-time by $(t,x) \mapsto (\l^{-\frac 13}t, \l^{-\frac 13}x)$.  Let
$$
v(t,x) = v_{J_\alpha}(\l^{-\frac 13}t, \l^{-\frac 13}x), \qquad F(t,x) = \l^{-\frac 13}\left((D_t + P_{\l^{2/3}}) v_{J_\alpha}\right)(\l^{-\frac 13}t, \l^{-\frac 13}x).
$$
Setting $\mu = \l^{\frac 23}$, it thus suffices to show the rescaled estimate
\begin{equation*}
\|v\|_{L^p_\veps L^q} \lesssim \mu^{s+\frac 1p}2^{-\frac{J_\alpha}{2} \sigma(p,q)} \left(\|v\|_{L^2_{\veps} L^2} + \|F\|_{L^2_{\veps}L^2}\right).
\end{equation*}
Indeed, if this estimate holds over the slab $[-\veps,\veps] \times \RR^n$, then it will also hold for translated slabs $[(k-1)\veps,(k+1)\veps] \times \RR^n$.  This yields the estimate over $[-\l^{\frac 13},\l^{\frac 13}]\times \RR^n$ (and subsequently \eqref{strslab0}) after taking a sum in $k$.

Since $P_{\l^{2/3}}$ is self adjoint, we have that differentiating $\|v(t,\cdot)\|^2_{L^2}$ in $t$ gives
\begin{equation}\label{adjoint}
\|v\|_{L^\infty_{\veps} L^2} \lesssim \|v\|_{L^2_{\veps} L^2} + \|F\|_{L^1_{\veps}L^2},
\end{equation}
Furthermore, $\|F\|_{L^1_{\veps}L^2}\lesssim  \|F\|_{L^2_{\veps}L^2}$. It thus suffices to show that
\begin{equation}\label{rescaleJ}
\|v\|_{L^p_\veps L^q} \lesssim
\mu^{s+\frac 1p}\theta_0^{\sigma(p,q)} \left(\|v\|_{L^\infty_{\veps} L^2} + \|F\|_{L^1_{\veps}L^2}\right), \qquad \theta_0 := 2^{-\frac{J_\alpha}{2}},
\end{equation}
where the $\theta$ notation is used here and below to align this work with \cite{smithsogge06} and \cite{bssschrod}.

The inequality \eqref{rescaleJ} follows from wave packet methods. To
this end, we let $g$ be a fixed, real valued, radial Schwartz class function
with $\widehat{g}$ compactly supported in a small ball. Furthermore, we take $g$ to be normalized so that $\|g\|_{L^2} =
(2\pi)^{-\frac n2}$.  With this, we define the operator $T_\mu$ on
Schwartz class functions by
\begin{equation}\label{tmudef}
(T_\mu f)(x,\xi) = \mu^{\frac n4} \int e^{-i\langle \xi, y-x \rangle} g(\mu^{\frac 12}(y-x)) f(y)\,dy.
\end{equation}
The normalization ensures that $T^*_\mu T_\mu =I$ and $\|T_\mu f\|_{L^2(\RR^{2n}_{x,\xi})} = \|f\|_{L^2(\RR^{n}_{y})}$.  Let
$$
\tilde{v}(t,x,\xi) = (T_\mu v(t,\cdot))(x,\xi) .
$$
Recall that since $\supp(\widehat{v}_{J_\alpha}(t,\cdot))\subset \{|\xi_n| \lesssim  \l 2^{-\frac{J_\alpha}{2}} \}$, we have that the rescaled function satisfies $\supp(\widehat{v}(t,\cdot))\subset \{|\xi_n| \lesssim \mu \theta_0 \}$.  The compact support of $g$ allows us to assume
\begin{equation}\label{wpsupportv}
\supp(\tilde{v}(t,x,\cdot)) \subset \{|\xi|\approx \mu, |\xi_n|
\lesssim  \mu\theta_0 \}.
\end{equation}

In this subsection, let $q$ be the symbol defined by $q(x,\xi) = \l^{-\frac 13} p_{\l^{2/3}}(\l^{-\frac 13} x, \l^{\frac 13} \xi)$ where $p_{\l^{2/3}}$ is the symbol of $P_{\l^{2/3}}$. Hence $(D_t+Q(x,D))v =F$ and
$$
|\prtl^\beta_x \prtl^\gamma_\xi q(x,\xi)| \lesssim_{\alpha, \beta} \mu^{1-|\gamma|+\frac 12\max(0,|\beta|-2)} \qquad |\xi|\approx \mu.
$$
It is shown in \cite[(29)]{bssschrod} (and similarly in
\cite[(3.1)]{bsspams}) that we may write
\begin{equation}\label{conjugeqn}
\left(\prtl_t - d_\xi q(x,\xi) \cdot d_x + d_x q(x,\xi)\cdot
d_\xi+iq(x,\xi) -i\xi\cdot d_\xi q(x,\xi) \right) \tilde{v}(t,x,\xi)
= \tilde{F}(t,x,\xi),
\end{equation}
where $\tilde{F}$ is supported in the same set appearing in \eqref{wpsupportv} and satisfies
$$
\|\tilde{F}\|_{L^1_\veps L^2(\RR^{2n}_{x,\xi})} \lesssim \|v\|_{L^\infty_{\veps} L^2(\RR^n)} +
\|F\|_{L^1_{\veps}L^2(\RR^n)}.
$$

Now let $\Theta_{r,t}(x,\xi) = (x_{r,t}(x,\xi),\xi_{r,t}(x,\xi))$ be the time $r$ solution of initial value problem for Hamilton's equations
$$
\dot{x} = d_\xi q(x,\xi), \qquad \dot{\xi} = -d_xq(x,\xi), \qquad (x(t),\xi(t)) = (x,\xi).
$$
Observe that since $q$ is independent of time, $\Theta_{r,t}(x,\xi)=\Theta_{0,t-r}(x,\xi)$.  Define
$$
\psi(t,x,\xi) = \int_0^t \big[ q(\Theta_{s,t}(x,\xi)) -\xi_{r,t}(x,\xi)\cdot d_\xi q(\Theta_{s,t}(x,\xi))\big] \,ds.
$$
This allows us to write
$$
\tilde{v}(t,x,\xi) =
e^{-i\psi(t,x,\xi)}\tilde{v}(0,\Theta_{0,t}(x,\xi)) + \int_0^t
e^{-i\psi(t-r,x,\xi)} \tilde{F}(r,\Theta_{0,t-r}(x,\xi))\,dr.
$$

We now define an operator $W$ acting on functions $\tilde{f} \in L^2(\RR^{2n}_{x,\xi})$
satisfying a support condition in $\xi$ of the form
\begin{equation}\label{wpsupportf}
\supp(\tilde{f}(x,\cdot))\subset\{|\xi| \approx \mu, |\xi_n|\lesssim
\mu\theta_0 \}.
\end{equation}
For such functions we take
\begin{equation}\label{Wdef}
W\tilde{f}(t,x) = T^*_\mu \left[ \tilde{f}\circ\Theta_{0,t}\right](x).
\end{equation}
Since $T_\mu$ is an isometry and
$(x,\xi) \mapsto \Theta_{0,t}(x,\xi)$ is a
measure preserving diffeomorphism, it now suffices to show that
\begin{equation}\label{Wmap}
\|W\tilde{f}\|_{L^p_\veps L^q(\RR^n)} \lesssim \mu^{s+\frac 1p}
\theta_0^{\sigma(p,q)}\|\tilde{f}\|_{L^2(\RR^{2n})}.
\end{equation}
By duality, this is equivalent to
\begin{equation}\label{Wdual}
\|WW^*G\|_{L^p_\veps L^q(\RR^n)} \lesssim \mu^{2(s+\frac 1p)}
\theta_0^{2\sigma(p,q)}\|G\|_{L^{p'}_\veps L^{q'}(\RR^n)}.
\end{equation}
Let $W_t$ be the fixed time operator $W_t \tilde{f}=W\tilde{f}(r,x)\big|_{r=t}$.
Similar to \cite[(4.2), (4.3)]{bssschrod}, we have the pair of
estimates
\begin{equation}\label{Wdispersive}
\|W_rW^*_t\|_{L^1 \to L^\infty} \lesssim \mu^{\frac
n2}(\mu^{-1}+|t-r|)^{-\frac{n-1}{2}}(\mu^{-1}\theta_0^{-2}+|t-r|)^{-\frac{1}{2}},
\end{equation}
\begin{equation}\label{Wenergy}
\|W_rW^*_t\|_{L^2 \to L^2} \lesssim 1.
\end{equation}
Indeed, if such estimates hold, then interpolation gives
\begin{equation}\label{Winterpolate}
\|W_rW^*_t\|_{L^{q'} \to L^{q}} \lesssim \mu^{\frac n2(1-\frac 2q)} (\mu^{-1}+|t-r|)^{-\frac{n-1}{2}(1-\frac 2q)}(\mu^{-1}\theta_0^{-2}+|t-r|)^{-\frac{1}{2}(1-\frac 2q)}.
\end{equation}
When $\frac 2p + \frac nq = \frac n2$ we use the full strength of the time decay to obtain
$$
\|W_rW^*_t\|_{L^{q'} \to L^{q}} \lesssim \mu^{\frac 2p}|t-r|^{-\frac 2p}.
$$
The bound \eqref{Wdual} then follows from the Hardy-Littlewood-Sobolev theorem of fractional integration.  When $\frac 2p + \frac nq < \frac n2$, we sacrifice as much time decay as possible in the last factor on the right of \eqref{Winterpolate} to obtain
\begin{equation}\label{lqdispersive}
\|W_rW^*_t\|_{L^{q'} \to L^{q}} \lesssim \mu^{2(s+\frac 1p)}\theta_0^{2\sigma(p,q)}|t-r|^{-\frac 2p}.
\end{equation}
Thus \eqref{Wdual} follows again by fractional integration.

The estimate \eqref{Wenergy} is a consequence of the fact that $T_\mu$ is an isometry and $(x,\xi) \mapsto \Theta_{0,t}(x,\xi)$ is measure preserving.  Hence we turn our attention to \eqref{Wdispersive} and outline its proof.  As in \cite[\S4]{bssschrod}, the action of $W_r W_t^*$ on a function $G(t,y)$ can be characterized as integration against a kernel $K(r,x;t,y)$ that takes the form
\begin{equation}\label{packetkernel}
\mu^{\frac n2} \int e^{i\langle\zeta, x-z\rangle - i
\psi(r-t,x,\zeta) -i\langle \zeta_{t,r, y-z_{t,r}}\rangle}
g(\mu^{\frac 12}(y-z_{t,r}))g(\mu^{\frac
12}(x-z))\Upsilon(\zeta)\,dz \,d\zeta,
\end{equation}
where $\Upsilon(\zeta)$ is a harmless smooth cutoff to the region in \eqref{wpsupportf}.  The desired estimate \eqref{Wdispersive} thus follows from the bound
\begin{equation}\label{kernelbound}
|K(r,x;t,y)| \lesssim \mu^{\frac n2}
(\mu^{-1}+|t-r|)^{-\frac{n-1}{2}}(\mu^{-1}\theta_0^{-2}+|t-r|)^{-\frac{1}{2}}.
\end{equation}

The proof of estimate \eqref{kernelbound} follows by the same
methods as in \cite[\S4]{bssschrod}.  The only difference is that
the angular parameter $\theta_0$ is larger than what is used in that
work (there it is assumed that $\theta_0=\mu^{-\frac 12}$). However, the proof is easily modified to handle this
situation.  We motivate the main idea here using the principle of stationary phase.  However, since the derivatives of phase and amplitudes involved depend on $\mu$, the precise arguments from \cite[\S4]{bssschrod} are required.

As observed in \cite[\S4]{bssschrod}, \cite[p.254]{bsspams}, $-\prtl_{\zeta_i}\psi(r-t,x,\zeta)  + \zeta_{t,r}\cdot\prtl_{\zeta_i}z_{t,r}=0$ for any $1 \leq i \leq n$. Therefore applying $-id_\zeta$ to the phase in \eqref{packetkernel} gives
$$
x-z-d_\zeta \zeta_{t,r}\cdot( y-z_{t,r}(z,\zeta)).
$$
Since $g(\mu^{\frac 12}(x-z))$ is highly concentrated near $x=z$,
this differential can be well approximated by
$$
-d_\zeta \zeta_{t,r}\cdot (y-z_{t,r}(x,\zeta)),
$$
which has a critical point when $y=z_{t,r}(x,\zeta)$ for some $\zeta$. The Hessian is now approximately
\begin{equation}\label{phasehessian}
-d_\zeta^2 \zeta_{t,r}\cdot (y-z_{t,r}(x,\zeta))+d_\zeta
\zeta_{t,r}\cdot d_\zeta z_{t,r}(x,\zeta).
\end{equation}
It can then be reasoned that $z_{t,r}(x,\zeta) \approx z+2\mu^{-1}(t-r)\zeta$ and that $\zeta_{t,r}(z,\zeta) \approx \zeta$ and this approximation behaves well under differentiation.  Hence the first term in \eqref{phasehessian} is small relative to the second, which is essentially $-2\mu^{-1}(t-r)I$.  This illustrates why the critical point is nondegenerate.

To see \eqref{kernelbound}, treat the cases $0\leq |t-r| \leq \mu^{-1}$, $\mu^{-1}\theta_0^{-2} \leq |t-r| \leq \veps$, and $\mu^{-1} \leq |t-r| \leq \mu^{-1}\theta_0^{-2}$ separately.  In the first case, $0\leq |t-r| \leq \mu^{-1}$, we see that
$$
|K(r,x;t,y)| \lesssim \mu^{n}\theta_0,
$$
which is sufficient and does not use the oscillations of the phase. It just uses that the integral in $z$ is uniformly bounded and the integral over $\zeta$ gives the volume of the set in \eqref{wpsupportf}, which is $\approx \mu^n\theta_0$.  When $\mu^{-1}\theta_0^{-2} \leq |t-r| \leq \veps$, it suffices to show
$$
|K(r,x;t,y)| \lesssim \mu^{\frac n2}|t-r|^{-\frac n2}.
$$
Since the Hessian behaves like $-2\mu^{-1}(t-r)I$, this is a typical application of the principle of stationary phase.  In the final case $\mu^{-1} \leq |t-r| \leq \mu^{-1}\theta_0^{-2}$, we obtain better estimates by applying stationary phase in the variables $\zeta_1,\dots,\zeta_{n-1}$ and ignoring any oscillations in $\zeta_n$. The structure of the Hessian is amenable to such an approach.  Since the $\zeta_n$ support of the set \eqref{wpsupportf} has volume $\approx \mu \theta_0$, we obtain
$$
|K(r,x;t,y)| \lesssim
\left(\mu\theta_0\right)\mu^{\frac{n-1}2}|t-r|^{-\frac{n-1}2}.
$$

We now turn to the estimates \eqref{strslabj} when $1 \leq j < J_\alpha$, and set $\theta = 2^{-\frac j2}$, again to align notation with prior works.  Recall that $v_j =\widetilde{\Gamma}_j(D_n)\psi_j u_\l$ and hence
\begin{equation*}
v_j(t,x) = \l 2^{-\frac j2}\int \check{\widetilde{\Gamma}}_2\left(\l2^{-\frac j2}(x_n-y_n)\right) \psi_j(y_n)u_\l(t,x',y_n)\,dy_n.
\end{equation*}
Given that $\psi_j (y_n)$ is supported where $|y_n| \approx 2^{-j}=\theta^2$, and the convolution kernel is rapidly decaying on the much smaller scale $\l^{-1}2^{\frac j2} = \l^{-1}\theta^{-1}$, it can be seen from this that for $M$ sufficiently large
\begin{equation}\label{boundarysep}
\| \langle \l^{\frac 12} \theta^{-\frac 12} x_n \rangle^{-M}v_j\|_{L^2_{\veps}L^2} \lesssim (\l \theta^3)^{-M}\| u_\l\|_{L^2_{\veps}L^2} \lesssim \l^{-M\delta}\|u_\l\|_{L^2_{\veps}L^2}.
\end{equation}
Let $\g^{kl}_j$ be the coefficients obtained by truncating the $\g_\l$ to frequencies $c \l^{\frac 12}\theta^{-\frac 12}$ for some small constant $c$.  We observe that
\begin{align}
|\prtl^\beta_{x'} \prtl^m_{x_n}
(\g^{kl}_\l-\g^{kl}_j)(x)|\lesssim (\l^{\frac 12}\theta^{-\frac 12})^{m-1}\langle\l^{\frac 12}\theta^{-\frac 12} x_n
\rangle^{-M}, & & m=0,1\label{truncestapprox}\\
|\prtl^\beta_{x'} \prtl^m_{x_n} \g^{kl}_j(x)|  \lesssim
c_0\big(1+(\l^{\frac 12}\theta^{-\frac 12})^{\max(0,m-1)}\langle \l^{\frac 12}\theta^{-\frac 12} x_n \rangle^{-M}\big), & &|\beta| + m \geq 1. \label{truncestderiv}
\end{align}
The proof of the latter pair of estimates follow by the same considerations as in \cite[(6.31), (6.32)]{smithsogge06} (taking $\mu^{\frac 12} = \l^{\frac 12}\theta^{-\frac 12}$ there).  The main idea here is that the most singular part of $\prtl_n^2 (\g^{ij}(x',|x_n|))$ behaves like $\prtl_n \g^{kl}(x',0) \cdot\delta(x_n)$.  Hence $\g^{kl}_j$ is essentially smooth outside a $\l^{-\frac 12}\theta^{\frac 12}$ neighborhood of $x_n=0$.

Now let $P_{j}$ denote the differential operator obtained by of $P_\l$ with the $\g^{kl}_j$.  By \eqref{boundarysep}, \eqref{truncestapprox} we have
\begin{equation}\label{Pjapprox}
\| (P_j-P_\l)v_j\|_{L^1_{\veps}L^2}
\lesssim \l^{\frac 12} \theta^{\frac 12}(\l \theta^3)^{-M}\|
u_\l\|_{L^2_{\veps}L^2} \lesssim \l^{\frac 12-M\delta}\|u_\l\|_{L^2_{\veps}L^2}.
\end{equation}
Let $F_j(t,x) = (D_t + P_j)v_j(t,x)$, and observe that it now suffices to show that
\begin{equation}\label{Pjineq}
\|v_j\|_{L^p_\veps L^q}
\lesssim \l^{s+\frac 1p} \theta^{\sigma(p,q)}\Big( \theta^{-\frac 12}\|v_j\|_{L^2_\veps L^2} + \l^{\frac 12}\theta\|\langle \l^{\frac 12}\theta^{-\frac 12} x_n \rangle^{-M}v_j\|_{L^2_\veps L^2} + \theta^{\frac 12}\|F_{j}\|_{L^2_\veps L^2} \Big).
\end{equation}
Indeed, by taking $M$ sufficiently large in the estimate \eqref{Pjapprox}, the error $(P_\l -P_j)v_j$ can be absorbed into the term $\l^{-\frac 32}\|u_\l\|_{L^2_{\veps}L^2}$ in \eqref{strslabj} and it suffices to consider the equation involving $P_j$.  Similarly, we can take $M$ large in \eqref{Pjineq} so that \eqref{boundarysep} will ensure that the term involving $\langle \l^{\frac 12}\theta^{-\frac 12} x_n \rangle^{-M}v_j$ can also be bounded by $\l^{-\frac 32}\|u_\l\|_{L^2_{\veps}L^2}$.

We now rescale the space time variables by $(t,x) \mapsto (\theta t,
\theta x)$ and set $\mu = \l\theta$.  Let
$$
v(t,x) := v_j(\theta t,\theta x),
\qquad F(t,x) := \theta \left((D_t + P_j)v_j\right)(\theta t, \theta x).
$$
Since we are working with a fixed index $j$, we suppress the
dependence on $\mu$, $j$ in these definitions.  Furthermore, let
$Q(x,D)$ be defined by the symbol $q(x,\xi) = \theta p_j(\theta x,
\theta^{-1}\xi)$ where $p_j$ is the symbol of $P_j$. We now have a
solution to
$$
\left(D_t + Q \right)v = F,
$$
and symbol of $Q$ satisfies (cp. \cite[(26)]{bssschrod} and \eqref{truncestderiv} above)
\begin{equation}\label{qsymbolest}
\left| \prtl^\beta_x \prtl^\alpha_\xi q(x,\xi)\right| \lesssim
\begin{cases} \mu^{1-|\alpha|}, & \text{if } |\beta|
=0,\\
c_0\left(1+\mu^{(|\beta|-1)/2}\theta\langle \mu^{\frac 12}x_n
\rangle^{-N}\right) \mu^{1-|\alpha|}, & \text{if }|\beta|\geq 1.
\end{cases}
\end{equation}

Rescaling the estimate \eqref{Pjineq}, it suffices to show
\begin{equation*}
\|v\|_{L^p_\veps L^q} \lesssim \mu^{s+\frac 1p}
\theta^{\sigma(p,q)}\left( \|v\|_{L^2_\veps L^2} + \mu^{\frac
12}\theta\|\langle \mu^{\frac 12} x_n \rangle^{-M}v\|_{L^2_\veps
L^2} +\|F\|_{L^2_\veps L^2}
\right),
\end{equation*}
because we can reason as before to see this yields an estimate on $[-\theta^{-1}\veps, \theta^{-1}\veps] \times \RR^n$.  We can again argue as in \eqref{adjoint}, to see that it suffices to show
\begin{equation}\label{rescalej}
\|v\|_{L^p_\veps L^q} \lesssim \mu^{s+\frac 1p}
\theta^{\sigma(p,q)}\left( \|v\|_{L^\infty_\veps L^2} + \mu^{\frac
12}\theta\|\langle \mu^{\frac 12} x_n \rangle^{-M}v\|_{L^2_\veps
L^2} + \|F\|_{L^1_\veps L^2}
\right).
\end{equation}
This estimate again follows using wave packet methods, however here we must take additional care as $\mu^{-1}q(x,\xi)$ is not uniformly $C^2$ in $x$.  Instead we use Lemma 4.3 from \cite{smithsogge06} which shows how to conjugate the operator $Q$ by the wave packet transform $T_\mu$.  This lemma shows that
\begin{multline}\label{schwartzmod}
\big(q(y,D_y)^* - id_\xi(x,\xi)\cdot d_x + id_xq(x,\xi)\cdot d_\xi\big) \left[ e^{i\langle \xi, y-x\rangle} g(\mu^{\frac 12}(y-x))\right]\\
=e^{i\langle \xi, y-x\rangle} g_{x,\xi}(\mu^{\frac 12}(y-x)),
\end{multline}
where $g_{x,\xi}(\cdot)$ is a family of Schwartz class functions depending on $(x,\xi)$ and with $\widehat{g}_{x,\xi}$ also supported in a small ball. In addition, if $\|\cdot\|$ is any Schwartz seminorm, we have the estimate
\begin{equation}\label{seminorm}
\|g_{x,\xi}\| \lesssim 1+c_0\mu^{\frac 12}\theta\langle \mu^{\frac 12} x_n \rangle^{-2M}.
\end{equation}
Strictly speaking this lemma is stated for $M=3$, but the rapid
decay of the symbol estimates in \eqref{qsymbolest} means that the
same proof works for any $M>0$.  Analogous to \eqref{conjugeqn}, we
take $\tilde{v}(t,x,\xi)$ as the wave packet transform of
$v(t,\cdot)$, but this time let $\tilde{F}(t,x,\xi)$ denote the transform of $F(t,\cdot)$.  The function $\tilde{v}(t,x,\xi)$
satisfies
\begin{multline*}
\left(\prtl_t - d_\xi q(x,\xi) \cdot d_x + d_x q(x,\xi)\cdot
d_\xi+iq(x,\xi) -i\xi\cdot d_\xi q(x,\xi) \right) \tilde{v}(t,x,\xi)
=\\ \tilde{F}(t,x,\xi)+\tilde{G}(t,x,\xi),
\end{multline*}
where
$$
\tilde{G}(t,x,\xi)=\mu^{\frac n4} \int e^{-i\langle \xi, y-x
\rangle} g_{x,\xi}(\mu^{\frac 12}(y-x)) v(t,y)\,dy,
$$
and $g_{x,\xi}$ is the family of Schwartz functions in
\eqref{schwartzmod}.  Again by the compact support of $\widehat{g}$ and $\widehat{g}_{x,\xi}$, we may assume that
\begin{equation*}
\supp(\tilde{v}(t,x,\cdot)), \;\supp(\tilde{F}(t,x,\cdot)), \; \supp(\tilde{G}(t,x,\cdot)) \subset \{\xi: |\xi|\approx \mu, |\xi_n| \lesssim  \mu\theta\}.
\end{equation*}

We further decompose
$\tilde{G}=\tilde{G}_1+\tilde{G}_2$ where $\tilde{G}_1=\eta
\tilde{G}$ and $\eta=\eta(x_n)$ is supported where $\mu^{\frac
12}\theta\langle \mu^{\frac 12}x_n \rangle^{-M} \geq \frac 12$ and
$\eta\equiv 1$ on the set where $\mu^{\frac 12}\theta\langle
\mu^{\frac 12}x_n \rangle^{-M} \geq 1$.  Therefore
$$
\tilde{G}_1(t,x,\xi)=\eta(x_n)\mu^{\frac n4} \int e^{-i\langle \xi,
y-x \rangle} g_{x,\xi}(\mu^{\frac 12}(y-x)) v(t,y)\,dy.
$$
We claim that
\begin{align}
\|\tilde{G}_1\|_{L^1_\veps L^2(\RR^{2n}_{x,\xi})} &\lesssim \mu^{\frac
12}\theta\|\langle \mu^{\frac 12} x_n \rangle^{-M}v\|_{L^2_\veps
L^2(\RR^n)},\label{G1est}\\
\|\tilde{G}_2\|_{L^1_\veps L^2(\RR^{2n}_{x,\xi})} &\lesssim \|v\|_{L^2_\veps L^2(\RR^n)}
.\label{G2est}
\end{align}
To see \eqref{G1est}, we observe that by duality ($TT^*$) it suffices to show
that
$$
\left\|\int \widetilde{K}(y,\eta;x,\xi) h(x,\xi)\,dxd\xi\right\|_{L^2(\RR^{2n}_{y,\eta})}
\lesssim \mu \theta^2\left\| h \right\|_{L^2(\RR^n_{x,\xi})},
$$
where $\widetilde{K}(y,\eta;x,\xi) $ is defined as
$$
\mu^{\frac n2} e^{i\langle \eta, y\rangle -i\langle \xi, x \rangle
}\eta(x_n)\eta(y_n)\int e^{i\langle \xi-\eta,z\rangle}
g_{y,\eta}(\mu^{\frac 12}(z-y))g_{x,\xi}(\mu^{\frac 12}(z-x))\langle
\mu^{\frac 12}z_n\rangle^{2M}\,dz
$$
(cp. \cite[Lemma 4.2]{smithsogge06}). We may now integrate by parts and use \eqref{seminorm} to obtain
$$
\left|\widetilde{K}(y,\eta;x,\xi)\right| \lesssim \mu\theta^2\left( 1+\mu^{-\frac
12}|\eta-\xi| + \mu^{\frac 12}|x-y| \right)^{-(2n+1)},
$$
and the desired estimate follows.  The bound \eqref{G2est} follows
similarly.  This shows that the parametrix has bounded error relative to the spaces on the right hand side of \eqref{rescalej}.

The estimate \eqref{rescalej} now follows similarly to the one in \eqref{rescaleJ}.  Indeed, it now suffices to define the map $W$ as in \eqref{Wdef} and prove \eqref{Wdual} with $\theta=2^{-\frac j2}$ replacing $\theta_0$.  The latter estimate is now a consequence of \eqref{kernelbound} with the same replacement, which in turn follows from the same considerations as before. Indeed, since we can take the $\zeta$ integral in \eqref{packetkernel} to be supported in a region of the form $\{ |\zeta| \approx \mu, |\zeta_n| \lesssim \mu\theta\}$, the desired estimates are a consequence of the arguments in \cite{bssschrod}.

The estimates \eqref{ntangstz} follow more directly from the results in \cite{bssschrod}.  Indeed, since the frequency support of the $w_j$ are localized to a set where $|\xi_n| \approx \l 2^{-\frac j2}$, this is a consequence of \cite[(23), (25)]{bssschrod}, the latter estimate following from \cite[\S 6]{smithsogge06}. Strictly speaking, the power $\sigma$ appearing there is stated only for certain values of $p$, $q$. However, as motivated above, it also holds for the value of $\sigma(p,q)$ determined by~\eqref{sigmachoice} since an estimate of the form \eqref{Wdispersive} is established there.

\appendix
\section{Remarks on the proof of Theorem \ref{thm:nrgdk}}
Here we provide a brief outline of Ivanovici's proof of Theorem \ref{thm:nrgdk} and make some additional remarks.  As in \S2, attention will be restricted to Neumann conditions.  The estimates in \eqref{homognrgdk}, \eqref{inhomognrgdk} involve localization with respect to the Fourier variable dual to $t$, as opposed to the localization with respect to the spectrum of boundary Laplacian appearing in Ivanovici's work, but by the functional calculus these are essentially equivalent.  By a duality argument, it suffices to prove \eqref{inhomognrgdk}, leading one to consider solutions $w:\Omega \to \CC$ to the Helmholtz equation
\begin{equation*}
(\Delta-\l^2) w = \chi_j g, \qquad \prtl_\nu w\big|_{\prtl \Omega} = 0,
\end{equation*}
satisfying the (outgoing) Sommerfeld radiation condition
\begin{equation}\label{sommerfeld}
\lim_{r\to\infty}r^{\frac{n-1}{2}}\left(\prtl_r w - i\l w\right) =0, \qquad |x| =r.
\end{equation}
In \cite[\S2.3]{ivanoballs}, \cite[\S3.1]{ivanorevise} and the estimates in Theorem \ref{thm:nrgdk} are reduced to the bounds
\begin{equation}\label{helmholtzbounds}
\|\chi_j w\|_{L^2(\Omega)} \lesssim \l^{-1}2^{-\frac j2} \|\chi_j g\|_{L^2(\Omega)}.
\end{equation}
This reduction follows from taking the Fourier transform in the time variable (via a limiting procedure) and then observing standard results on existence and uniqueness of solutions to this equation.  For the latter, the interested reader can find a supplemental treatment in Theorems 4.37 and 4.38 in \cite{leis}, which works for homogeneous Neumann (and Dirichlet) conditions.

The bounds \eqref{helmholtzbounds} then follow by taking polar coordinates $(r,\omega) \in [1,\infty) \times \sph^{n-1}$ on $\Omega$ so that the Laplace operator takes the form
$$
\Delta = -\prtl_r^2 - \frac {n-1}r \prtl_r + \frac{1}{r^2}\Delta_{\sph^{n-1}},
$$
with $\Delta_{\sph^{n-1}}$ denoting the Laplace-Beltrami operator on $\sph^{n-1}$.  Let $\{\varphi_l\}_{1}^\infty$ to denote an orthonormal eigenbasis on $L^2(\sph^{n-1})$ satisfying $\Delta_{\sph^{n-1}}\varphi_l= \mu_l \varphi_l$.  Now write $w(r,\omega) = \sum_1^\infty w_l(r)\varphi_l(\omega)$ and $\chi_j g(r,\omega) = \sum_1^\infty g_l(r)\varphi_l(\omega)$.  Therefore, if we set $\nu_l = \left(\mu_l +\frac{(n-2)^2}{4}\right)^{\frac 12}$, then each $w_l(r)$ will satisfy \eqref{sommerfeld} and
\begin{equation}\label{lnueqn}
L_{\nu_l} w_l  := \left(-\prtl_r^2 - \frac{n-1}r \prtl_r + \frac{\nu_l^2}{r^2}-\l^2\right)w_l = g_l, \qquad
\prtl_r w_l(1)=0.
\end{equation}
Let $d\rho$ denote the measure $r^{n-1} dr$. By orthogonality, it suffices to see
\begin{equation}\label{modeineq}
\|w_l\|_{L^2(A_j,d\rho)} \lesssim \l^{-1}2^{-\frac j2} \|g_l\|_{L^2(A_j,d\rho)}, \qquad A_j := \{r: 1 \leq r \leq 1+ 2^{-j+2} \}
\end{equation}

Suppressing the $l$ in the notation, consider solutions $w$ to $L_\nu w(r)= g(r)$ where $\nu \geq 0$ and $\supp(g) \subset A_j$.  Let $H_\nu(z)$ denote the Hankel function of the first kind, order $\nu$.  As observed in \cite[(2.31)]{ivanorevise}, the Green's kernel for the problem in \eqref{lnueqn} satisfying the outgoing radiation condition can be written for $r \geq s \geq 1$ as
\begin{equation}\label{greenfcn}
G_{\nu,\l}(r,s)=\frac{\pi}{2i} (rs)^{1-\frac n2} \left(\overline{H}_\nu(\l s)-\frac{\l^{-1}(1-\frac n2)\overline{H}_\nu(\l)+\overline{H}_\nu'(\l)}{\l^{-1}(1-\frac n2)H_\nu(\l)+ H_\nu'(\l)}H_\nu(\l s) \right)H_\nu (\l r),
\end{equation}
and the remaining values are determined by symmetry $G_{\nu,\l}(r,s)= G_{\nu,\l}(s,r)$.  Therefore $w(r)$ is given by
$
w(r) = \int_{A_j} G_{\nu,\l}(r,s) g(s)s^{n-1}\,ds,
$
and the bounds \eqref{modeineq} follow from showing that $\|G_{\nu,\l}\|_{L^2(A_j\times A_j)} \lesssim \l^{-1}2^{-\frac j2}$.  When $\l \geq \nu$, this in turn follows from bounds on the $L^2$ norm of the dilated Hankel functions as the coefficient of $H_\nu(\l s)$ in \eqref{greenfcn} has modulus one.  The desired $L^2$ estimates on the $H_\nu(\l \cdot)$, then follow as in Propositions 2.5 and 2.6 in \cite{ivanorevise}.  Indeed, one can take $2^{-j} \approx \l^{-\alpha}$ in that proof and by suitable bounds on the Hankel functions, the implicit constants there will be independent of $j$.

Strictly speaking, in the case $\l < \nu$, the Neumann case requires some additional care when the ODE \eqref{lnueqn} transitions from elliptic to non-elliptic behavior.  This is because one needs to use the coefficient of the $H_\nu(\lambda s)$ in \eqref{greenfcn} to counterbalance the exponential growth of this function when $\l s$ decreases away from $\nu$. To this end, we observe bounds on Bessel functions $J_\nu(\nu z)$,  $Y_\nu(\nu z)$ of the first and second kinds (which satisfy $H_\nu(\nu z) = J_\nu(\nu z) + i Y_\nu(\nu z)$ when $z \in \RR$) and their derivatives.  When $0< z \leq 1-\nu^{-\frac 23}$, we have
\begin{align}
|Y_\nu(\nu z)| &\approx \frac{1}{\nu^{\frac 12}(1-z^2)^{\frac 14}}\exp\left(\frac 23 \nu \zeta^{\frac 32}\right),
& |J_\nu(\nu z)| &\approx \frac{1}{\nu^{\frac 12}(1-z^2)^{\frac 14}}
\exp\left(-\frac 23 \nu \zeta^{\frac 32}\right),\label{besselsmallz}\\
|Y_\nu'(\nu z)| &\approx \frac{(1-z^2)^{\frac 14}}{\nu^{\frac 12}z}\exp\left(\frac 23 \nu \zeta^{\frac
32}\right),  & |J_\nu'(\nu z)| &\approx \frac{(1-z^2)^{\frac 14}}{\nu^{\frac 12}z}
\exp\left(-\frac 23 \nu \zeta^{\frac 32}\right).\label{besselprimesmallz}
\end{align}
where $\zeta$ is the decreasing function defined by $\frac 23 \zeta^{\frac 32} = \int_z^1 \frac{\sqrt{1-t^2}}{t}\,dt$.  This is a consequence of results of Olver, which proves asymptotic bounds on Bessel functions which are uniform in $z$ and can be differentiated (see e.g. Theorem 3.1, Chapter 11 in \cite{olverbook}, the observations (10.18) and Ex. 10.1 in \S10 there, and combine these with typical asymptotics on Airy functions).  For the sake of completeness, we also state some uniform bounds his work yields on $H_\nu(\nu z)$ for $z \geq 1-\nu^{-\frac 23}$,
\begin{equation}\label{bessellargez}
|H_\nu(\nu z)| \approx \begin{cases}
\nu^{-\frac 13}, & z \in [1-\nu^{-\frac 23},1+\nu^{-\frac 23}],\\
\nu^{-\frac 12}(z^2-1)^{-\frac 14}, & z \in [ 1+\nu^{-\frac 23},\infty).
\end{cases}
\end{equation}
The bounds \eqref{besselsmallz}, \eqref{bessellargez} are slight variations on the ones appearing in \cite{ivanoballs}, \cite{ivanorevise}.

The main idea is that these estimates yield the pointwise bound
\begin{equation}\label{greenptwise}
|G_{\nu,\l}(r,s)|^2 \lesssim \frac{1}{\nu\sqrt{|(\l s/\nu)^2 -1| }\sqrt{|(\l r/\nu)^2 -1| }}.
\end{equation}
By symmetry it suffices to see this when $r \geq s \geq 1$.  It is illustrative to rewrite the kernel in \eqref{greenfcn} in terms of $H_\nu(z)$, $J_\nu(z)$ as
\begin{equation*}
\frac{\pi}{2i} (rs)^{1-\frac n2} \left(J_\nu(\l s) H_\nu (\l r) -\frac{\l^{-1}(1-\frac n2)J_\nu(\l)+J_\nu'(\l)}{\l^{-1}(1-\frac
n2)H_\nu(\l)+H_\nu'(\l)}H_\nu(\l s) H_\nu (\l r)\right).
\end{equation*}
Thus since $\zeta$ is decreasing, \eqref{besselsmallz} implies that $|J_\nu(\l s)H_\nu(\l r)|^2$ is bounded by the right hand side of \eqref{greenptwise}.  Similarly, the bounds \eqref{besselprimesmallz} show that the coefficient of $H_\nu(\l s) H_\nu (\l r)$ is exponentially small, enough to dominate the exponential growth of this function as $r \to 1^+$.  Given \eqref{greenptwise}, matters are reduced to seeing that
\begin{equation}\label{ajbound}
\frac{1}{\nu} \int_{A_j} \frac{ds}{\sqrt{|1-(\l s/\nu)^2|} } \lesssim \l^{-1}2^{-\frac j2},
\end{equation}
which is only a small variation on the estimates in the aforementioned propositions in \cite{ivanorevise} and also \cite[\S2.2]{ivanoballs}, the main idea being that $|1-z^2|^{-\frac 12}$ is locally integrable.

\end{document}